\newcounter{my_enumerate_counter}
\newcommand{\pushcounter}{\setcounter{my_enumerate_counter}{\value{enumi}}}
\newcommand{\popcounter}{\setcounter{enumi}{\value{my_enumerate_counter}}}
\newcommand{\bbZ}{\mathbb Z}
\DeclareMathOperator{\Triv}{Triv}
\DeclareMathOperator{\half}{half}
\DeclareMathOperator{\pos}{pos}
\DeclareMathOperator{\nor}{nor}
\newcommand{\bS}{\mathbf S} 
\newcommand{\bfx}{\mathbf x} 
\newcommand{\bfH}{\mathbf H} 
\DeclareMathOperator{\CRx}{CR_\bfx}
\DeclareMathOperator{\Sigx}{\Sigma_\bfx}
\DeclareMathOperator{\Fin}{Fin}
\DeclareMathOperator{\intG}{int_G} 
\newcommand{\intGx}[1]{\mathrm{int}_{G\rs #1}} 
\newcommand{\bDelta}{\mathbf \Delta}
\newcommand{\bSigma}{\mathbf \Sigma}
\newcommand{\bPi}{\mathbf \Pi}
\DeclareMathOperator{\ZFC}{ZFC^*}
\newcommand{\Qx}{\bbQ_{\bfx}}
\newcommand{\forces}{\Vdash}
\newcommand{\bb}{\mathbf b}
\newcommand{\bfC}{\mathbf C}
\newcommand{\bbS}{{\mathbb S}}
\newcommand{\bfS}{\mathbf S}
\newcommand{\bbN}{{\mathbb N}}
\newcommand{\bbQ}{\mathbb Q}
\newcommand{\bbR}{\mathbb R}
\newcommand{\cJ}{{\mathcal J}}
\newcommand{\cI}{{\mathcal I}}
\newcommand{\calR}{{\mathcal R}}
\newcommand{\cW}{{\mathcal W}}
\newcommand{\cX}{{\mathcal X}}
\newcommand{\calH}{{\mathcal H}}
\newcommand{\cZ}{{\mathcal Z}}
\newcommand{\fc}{\mathfrak c} 
\newcommand{\fd}{\mathfrak d} 
\newcommand{\fe}{\mathfrak e} 
\newcommand{\rs}{\restriction}
\newcommand{\cF}{\mathcal F}
\newcommand{\cB}{\mathcal B}
\newcommand{\cK}{\mathcal K}
\newcommand{\bbP}{\mathbb P}
\newtheorem{thm}{Theorem}[section]
\newtheorem{theorem}{Theorem}
\newtheorem{coro}[thm]{Corollary}
\newtheorem{corollary}[theorem]{Corollary}
\newtheorem{question}[thm]{Question}
\newtheorem{lemma}[thm]{Lemma}
\newtheorem{prop}[thm]{Proposition}
\theoremstyle{definition}
\newtheorem{definition}[thm]{Definition}
\newtheorem{problem}[thm]{Problem}
\DeclareMathOperator{\cf}{cf} 
\newcommand{\cP}{\mathcal P} 
\renewcommand{\vec}{\bar}
\DeclareMathOperator{\supp}{supp}
\newcommand{\iteration}{(\bbP_\xi, \dot\bbQ_\eta: \xi\leq \kappa, \eta<\kappa)}
\title{Trivial automorphisms}
\author{Ilijas Farah}
\address{Department of Mathematics and Statistics\\
York University\\
4700 Keele Street\\
North York, Ontario\\ Canada, M3J
1P3\\
and Matematicki Institut, Kneza Mihaila 35, Belgrade, Serbia}
\urladdr{http://www.math.yorku.ca/$\sim$ifarah}
\email{ifarah@mathstat.yorku.ca}
\author{Saharon Shelah}
\address{The Hebrew University of Jerusalem\\
Einstein Institute of Mathematics\\
Edmond J. Safra Campus, Givat Ram\\
Jerusalem 91904, Israel\\ and \\
Department of Mathematics\\
Hill Center-Busch Campus\\
Rutgers, The State University of New Jersey\\
110 Frelinghuysen Road\\
Piscataway, NJ 08854-8019 USA}
\email{shelah@math.huji.ac.il}
\urladdr{http://shelah.logic.at/}
\thanks{The first author was  partially supported by NSERC.}
\thanks{The second author would like to thank the Israel Science Foundation for
  partial support of this research (Grant no. 710/07).
No. 987 on 
Shelah's list of publications.}
\subjclass{}
\date{\today}
\begin{document}

\begin{abstract} We prove that the statement `For all  Borel ideals  $\cI$ and $\cJ$ on $\omega$, 
every isomorphism between Boolean algebras $\cP(\omega)/\cI$ and $\cP(\omega)/\cJ$ has a continuous representation' is relatively consistent with ZFC. In this model  
every isomorphism 
between $\cP(\omega)/\cI$ and any other quotient $\cP(\omega)/\cJ$ over a Borel ideal is trivial
for a number of Borel ideals   
$\cI$ on~$\omega$. 

We can also assure that  the dominating number,  $\fd$, is equal to $\aleph_1$ and that $2^{\aleph_1}>2^{\aleph_0}$.  
Therefore the  Calkin algebra has outer automorphisms while all automorphisms of $\cP(\omega)/\Fin$ 
are trivial. 

Proofs rely on delicate analysis of names for reals in a 
countable support iteration of suslin proper forcings. 
\end{abstract} 

\maketitle
\tableofcontents
\section{Introduction}
We start with a fairly general setting. 
Assume $X/I$ and $Y/J$ are quotient structures (such as groups, Boolean algebras, C*-algebras,\dots) with $\pi_I$ and $\pi_J$ denoting the respective quotient maps. 
Also assume  $\Phi$ is an isomorphism between 
$X/I$ and $Y/J$. A \emph{representation} of $\Phi$ is a map $F\colon X\to Y$ such that the diagram
\[
\diagram
X \rto^{\Phi_*} \dto_{\pi_I} & Y\dto_{\pi_J} \\
X/I\rto_{\Phi} & Y/J
\enddiagram
\]
commutes. Since representation is not required to have any algebraic properties its 
existence follows from the Axiom of Choice and is therefore inconsequential to the 
relation of $X/I$, $X/J$  and $\Phi$. 

We shall say that $\Phi$ is \emph{trivial} if it has a representation that is itself
a homomorphism between $X$ and $Y$. Requiring a representation to be
an isomorphism itself would be too strong since in many situations of interest there
exists an isomorphism which has a representation that is a homomorphism but does 
not have one which is an isomorphism. 

In a number of cases of interest  
 $X$ and $Y$ are structures of cardinality of the continuum and 
quotients $X/I$ and $Y/J$ are countably saturated in the model-theoretic sense (see e.g., \cite{ChaKe}). 
In this situation Continuum Hypothesis, CH,  makes it possible to  
use a diagonalization to construct
nontrivial automorphisms  of $X/I$ and, if the quotients are elementarily equivalent, 
an isomorphism between $X/I$ and  $Y/J$. 
For example, CH implies that 
Boolean algebra $\cP(\omega)/\Fin$ has nontrivial automorphisms
(\cite{Ru}) and Calkin algebra has outer automorphisms (\cite{PhWe:Calkin} or \cite[\S 1]{Fa:All}). 
This is by no means automatic and for example the quotient group $S_\infty/G$ (where $G$ is the
subgroup consisting of finitely supported permutations) has 
the group of outer automorphisms isomorphic to $\bbZ$ and all of its automorphisms are trivial 
(\cite{AlCoMac}). 
 Also, some quotient Boolean algebras of the form $\cP(\omega)/\cI$ 
for Borel ideals $\cI$ are not countably saturated 
and it is unclear whether  nontrivial automorphisms  exist (see \cite{Fa:CH}).
%One problem is that these quotients are not necessarily countably saturated. 
A construction of isomorphism between quotients over two different density ideals
that are not countably saturated in classical sense in \cite{JustKr} should be revisited using the 
logic of metric structures developed in~\cite{BYBHU}. 
As observed in \cite{JustKr}, these two quotients have the natural 
structure of complete metric spaces and when considered as models of the logic of metric structures
two algebras are countably saturated. This fact can be extracted from the proof in 
\cite{JustKr} or from its generalization given in \cite{Fa:CH}. 

We shall consider the opposite situation, but only after noting that by Woodin's $\Sigma^2_1$ 
absoluteness theorem (\cite{Wo:Beyond}, \cite{Lar:Stationary}) 
Continuum Hypothesis provides the optimal context for finding nontrivial 
isomorphisms whenever $X$ and  $Y$ have Polish space structure with Borel-measurable 
operations and 
 $I$ and $J$ are Borel ideals (see \cite[\S 2.1]{Fa:AQ}).

The line of research to which the present paper belongs was started by the 
second author's proof  that the assertion `all automorphisms of $\cP(\omega)/\Fin$ are trivial' 
is relatively consistent with ZFC (\cite{Sh:b}). A weak form of this conclusion
 was extended to some other Boolean algebras of the 
form $\cP(\omega)/\cI$  in \cite{Just:Modification} and~\cite{Just:Repercussions}. 
 This line of research took a new turn when it was realized that 
 forcing axioms imply all isomorphisms  between quotients over Boolean 
 algebras $\cP(\omega)/\cI$, for certain Borel ideals $\cI$, 
 are trivial (\cite{ShSte:PFA}, \cite{Ve:OCA}, \cite{Just:WAT},  
 \cite{Fa:AQ}, \cite{Fa:Luzin}). 
 The first author conjectured in \cite{Fa:Rigidity} that the Proper Forcing Axiom, PFA, 
 implies all isomorphisms between any two quotient algebras of the form $\cP(\omega)/\cI$, 
 for a Borel ideal $\cI$, are trivial. 
 This conjecture naturally splits in following two rigidity conjectures. 
 \begin{enumerate}
 \item [(RC1)] PFA implies every isomorphism has a continuous representation, and
 \item [(RC2)] Every isomorphism with a continuous representation is trivial. 
 \end{enumerate}
 Noting that in our situation Shoenfield's Absoluteness Theorem implies that~(RC2) cannot be changed
 by forcing and that no progress on it has been made in the last ten years, we shall 
 concentrate on (RC1). 

In the present paper we construct a forcing extension in which all isomorphisms between 
Borel quotients have continuous representations. This does not confirm (RC1) but it does
give some positive evidence towards it. 

The assumption of the existence of a measurable cardinal in the following result is used only to assure sufficient forcing-absoluteness\footnote{More precisely, we need to know that in all forcing extensions 
by a small proper forcing all $\bSigma^1_2$ sets have the property of Baire, 
 $\bPi^1_2$-unformization and that all $\bPi^1_2$ sets have the Property of Baire. By Martin--Solovay (\cite{MaSo:Basis}) it suffices to assume that $H(\mathfrak c^+)^\#$ exists}  and it is
very likely unnecessary. 

\begin{theorem}\label{T0} Assume there exists a measurable cardinal. Then there is a forcing
extension in which all of the following are 
true. 
\begin{enumerate}
\item \label{T0.1} Every automorphism of a  quotient Boolean algebra  
$\cP(\omega)/\cI$ over a Borel ideal $\cI$ has a continuous representation. 
\item \label{T0.2} 
Every isomorphism between quotient Boolean algebras $\cP(\omega)/\cI$ and $\cP(\omega)/\cJ$
over Borel ideals has a continuous representation. 
\item Every homomorphism between  quotient Boolean algebras
$\cP(\omega)/\cI$ over Borel ideals  has a locally continuous representation. 
\item \label{T0.4} The dominating number, $\fd$, is equal to $\aleph_1$.
\item \label{T0.wCH} 
All of the above, and  in addition we can have either $2^{\aleph_0}=2^{\aleph_1}$ 
or~$2^{\aleph_0}<2^{\aleph_1}$. 
\end{enumerate}
\end{theorem}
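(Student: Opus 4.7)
The plan is to build a countable support iteration $(\bbP_\xi, \dot\bbQ_\eta : \xi \leq \omega_2, \eta < \omega_2)$ of suslin proper forcings over a ground model $V$ containing a measurable cardinal, with a $\diamondsuit$-like bookkeeping device enumerating all potential counterexamples to items \eqref{T0.1}--\eqref{T0.2}. The measurable cardinal is used only to guarantee the $\bSigma^1_2$ property of Baire, $\bPi^1_2$-uniformization, and $\bPi^1_2$ property of Baire in every small-forcing extension along the iteration, as indicated in the footnote to the theorem; these absoluteness features will be invoked to turn ``$\Phi$ agrees with a continuous map on a comeager set of codes'' into ``$\Phi$ has a continuous representation everywhere.''

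At stage $\alpha$ the bookkeeping hands us a $\bbP_\alpha$-name for a homomorphism $\dot\Phi\colon \cP(\omega)/\dot\cI \to \cP(\omega)/\dot\cJ$, where $\dot\cI, \dot\cJ$ are names for Borel ideals coded by reals in $V[G_\alpha]$. We choose $\dot\bbQ_\alpha$ to be a \lrc{} (or \lbp{}) forcing tailored to $\dot\Phi$; concretely, it adds a generic ``selector'' real whose continuous reading forces that, in $V[G_{\alpha+1}]$, the restriction of $\Phi$ to the reals of $V[G_\alpha]$ is represented by a Borel (hence continuous on a comeager set) function. The forcings $\dot\bbQ_\alpha$ are required to be $\omega^\omega$-bounding and, crucially, suslin, so that the standard preservation theorems for countable support iterations of proper forcings apply. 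By a cardinal arithmetic estimate and the fact that in $V[G_{\omega_2}]$ every Borel-coded homomorphism of the target form appears already at some stage below $\omega_2$ (by properness and ccc of the reflection relation), every such $\Phi$ is dealt with.

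The heart of the argument is the analysis of names for reals. One shows, by induction on $\xi$, that each $\bbP_\xi$ has the \emph{continuous reading of names} property: every $\bbP_\xi$-name $\dot x$ for a real is forced, by a dense set of conditions, to be given by a continuous function of a tuple of coordinates of the generic. In the limit this is proved by a fusion-style argument matching conditions across cofinally many stages and using $\omega^\omega$-boundedness to keep the matching data finite at each level. Once this is established, a name $\dot\Phi$ for a homomorphism can be ``localized'': for each Borel subset $B$ of $\cP(\omega)$ coded in $V[G_\alpha]$, the restriction $\dot\Phi \rs B$ is read off continuously from a real parameter, and the $\bSigma^1_2$ absoluteness afforded by the measurable then upgrades these local continuous representations to a single continuous representation of $\Phi$ on all of $\cP(\omega)$, giving \eqref{T0.1}--\eqref{T0.2} (with the local version giving (3)).

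Item \eqref{T0.4} follows automatically because each $\dot\bbQ_\alpha$ is $\omega^\omega$-bounding and countable support iterations of $\omega^\omega$-bounding proper forcings are $\omega^\omega$-bounding, so no dominating real is added over $V$; combined with the existence of a dominating family of size $\aleph_1$ in $V$ this yields $\fd=\aleph_1$. For \eqref{T0.wCH} we use the flexibility of the ground model: starting from $V\models 2^{\aleph_0}=2^{\aleph_1}=\aleph_2$ yields the CH-like option, while starting with $2^{\aleph_1}$ large and observing that the $\aleph_2$-c.c.\ of the iteration (in the relevant sense) preserves $2^{\aleph_1}>2^{\aleph_0}$ yields the other. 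The main obstacle I expect is the limit-stage preservation of \lrc{} under countable support iteration: this is the technical core announced in the abstract, and handling both successor and limit cases uniformly will require the delicate fusion analysis of names that occupies the body of the paper.
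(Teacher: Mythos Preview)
Your high-level picture (countable-support iteration of Suslin proper, $\omega^\omega$-bounding forcings with continuous reading of names, large-cardinal descriptive absoluteness to finish) matches the paper, and your treatment of \eqref{T0.4} and \eqref{T0.wCH} is essentially correct. But the core mechanism you describe is not what the paper does, and the substitute you sketch has a real gap.

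First, the iteration in the paper is \emph{not} bookkeeping-driven: the iterands are decided in the ground model, and there are only two kinds---the creature forcings $\bbQ_\bfx$ of \S\ref{S.CR} (one for each relevant parameter $\bfx$, arranged so that for every partition $\vec I$ of $\omega$ into finite intervals the set of stages at which $\dot\bbQ_\xi$ \emph{captures} $\vec I$ is stationary in $\{\xi:\cf(\xi)=\omega_1\}$) and the random-real forcing $\calR$ (also on a stationary set). No iterand is ``tailored to $\dot\Phi$''. The reason this matters is that the complexity estimates (Lemmas~\ref{L.PS}--\ref{L0}) need $\dot\bbQ_\eta$ to be a ground-model Suslin forcing; an adaptive choice depending on a name $\dot\Phi$ would not obviously give $\bDelta^1_2$ bounds. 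The argument that \emph{every} $\Phi$ in $V[G]$ is handled is by reflection (Proposition~\ref{P.sigma-1-2} and Corollary~\ref{C.sigma-1-2}), not by catching $\Phi$ at some stage.

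Second, and more seriously, your passage from ``local'' to ``global'' is where the content lies, and ``$\bSigma^1_2$ absoluteness upgrades local continuous representations to a single one'' is not an argument. In the paper the word \emph{local} means that the ideal $\Triv^1_\Phi=\{a:\Phi\rs\cP(a)\text{ has a continuous representation}\}$ is nonmeager; this is obtained from the creature forcings via Lemma~\ref{L2.1} (any $\Phi$ is locally $\bDelta^1_2$) together with Lemma~\ref{L-1}. Gluing these local pieces into a global continuous representation is the job of the \emph{random reals}: Lemma~\ref{L3} shows, using the group structure of $(\cP(\omega),\Delta)$ and the Haar-measure argument of Lemma~\ref{L.x1}/Corollary~\ref{C.x1}, that if $\calR$ is the first iterand and $\Phi$ is $\bbP_\kappa$-absolutely locally topologically trivial then the graph of $\Phi$ is $\bPi^1_2$. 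Only then do $\bPi^1_2$-uniformization and the $\bPi^1_2$ Baire property (from the measurable) finish the job. Your proposal has no analogue of this random-real/Haar-measure step, and without it there is no reason the local representations on various $a\in\Triv^1_\Phi$ should cohere into a single definable map.
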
 

Proof of Theorem~\ref{T0} will occupy most of the present paper (see
\S\ref{S.Plan} and \S\ref{S.ProofT1} for an outline). 
By the above the consistency of the conclusion of the 
full rigidity conjecture, `it  is relatively consistent with ZFC that all isomorphisms between 
quotients over Borel ideals are trivial' reduces to (RC2) above.

\begin{corollary} \label{C.Calk} It is relatively consistent with ZFC + `there exists a measurable cardinal'  that 
all automorphisms of $\cP(\omega)/\Fin$ are trivial while the Calkin algebra has  outer  automorphisms. 
In addition, the  corona  of every separable, stable C*-algebra has outer automorphisms. 
\end{corollary}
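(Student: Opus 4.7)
The plan is to derive the corollary by combining Theorem~\ref{T0} with two results that are independent of the forcing constructions in the body of the paper. First, I apply Theorem~\ref{T0} choosing the option in clause~(\ref{T0.wCH}) that produces $2^{\aleph_0}<2^{\aleph_1}$; combined with~(\ref{T0.4}) this yields a forcing extension $V[G]$ in which $\fd=\aleph_1$, $2^{\aleph_0}<2^{\aleph_1}$, and clauses~(\ref{T0.1})--(\ref{T0.2}) hold. All subsequent reasoning takes place inside $V[G]$, and no additional forcing is needed.

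For the assertion that every automorphism of $\cP(\omega)/\Fin$ is trivial, I would apply Theorem~\ref{T0}(\ref{T0.1}) with $\cI=\Fin$ to equip an arbitrary automorphism $\Phi$ with a continuous representation, and then cite the theorem, provable in ZFC and whose combinatorial core goes back to \cite{Sh:b} and appears in sharp form in \cite{Ve:OCA}, that any continuous (equivalently, any Baire-measurable) lifting of an automorphism of $\cP(\omega)/\Fin$ is induced by an almost-permutation of $\omega$. Hence $\Phi$ is trivial, and the first half of the corollary is established.

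For the Calkin algebra, and more generally for the corona $M(A)/A$ of any separable stable C*-algebra $A$, I would invoke the Phillips--Weaver construction of \cite{PhWe:Calkin} (see also \cite[\S 1]{Fa:All}), which produces an outer automorphism under CH. The role of CH in that construction can be localized to (i) producing a scale on a maximal abelian subalgebra isomorphic to $\ell^{\infty}/c_0$ sitting inside the corona, and (ii) a cardinality count that ensures that the automorphism obtained by a length-$\aleph_1$ back-and-forth falls outside the class of inner automorphisms, of which there are at most $2^{\aleph_0}$ many. Both ingredients remain available when CH is weakened to $\fd=\aleph_1$ together with $2^{\aleph_1}>2^{\aleph_0}$, and both of these cardinal-arithmetic hypotheses hold in $V[G]$. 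Since the masa $\ell^{\infty}/c_0$ embeds unitally in the corona of any separable stable C*-algebra, the same construction yields an outer automorphism in that generality as well.

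The main, and essentially only, non-routine point in this proof plan is the cardinal-arithmetic sharpening of Phillips--Weaver from CH to $\fd=\aleph_1 + 2^{\aleph_1}>2^{\aleph_0}$; once this has been recorded, the corollary is a bookkeeping consequence of Theorem~\ref{T0}.
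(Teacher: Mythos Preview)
Your proposal is correct and follows essentially the same route as the paper: invoke Theorem~\ref{T0} with the option $2^{\aleph_0}<2^{\aleph_1}$, use that continuous liftings of automorphisms of $\cP(\omega)/\Fin$ are trivial (a step the paper leaves implicit), and then cite the $\fd=\aleph_1$ plus weak CH sharpening of the Phillips--Weaver construction for the Calkin and corona assertions. The only cosmetic difference is that the paper dispatches the Calkin and corona cases by direct citation (to \cite[\S 1]{Fa:All} and \cite{CoFa:Automorphisms}) rather than sketching the mechanism as you do.
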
 

\begin{proof} By the above, the triviality of all automorphisms
of $\cP(\omega)/\Fin$  together with  $\fd=\aleph_1$ and
Luzin's weak Continuum Hypothesis,  
is relatively consistent with ZFC + `there exists a measurable cardinal'. By \cite[\S 1]{Fa:AQ}, the two latter assumptions imply the 
existence of an outer automorphism of the Calkin algebra. 
An analogous  result for coronas of some other C*-algebras, including separable stable algebras, 
is proved in   \cite{CoFa:Automorphisms}.  
\end{proof}

If $\alpha$ is an indecomposable countable ordinal, the \emph{ordinal ideal} $\cI_{\alpha}$ is
the ideal on $\alpha$ consisting of all subsets of $\alpha$ of strictly smaller order type. 
If $\alpha$ is multiplicatively indecomposable, then the \emph{Weiss ideal} $\cW_\alpha$ 
is the ideal of all subsets of $\alpha$ that don't include a closed copy of $\alpha$ in 
the ordinal topology. 
See \cite{Fa:AQ} for more on these ideals and the definition of nonpatholigical analytic p-ideals.

\begin{corollary} It is relatively consistent with  
 ZFC + `there exists a measurable cardinal' that 
every isomorphism between $\cP(\omega)/\cI$ and $\cP(\omega)/\cJ$ is trivial whenever
$\cI$ is Borel and $\cJ$  
 is in any of the following classes of ideals 
is trivial: 
\begin{enumerate}
\item Nonpathological analytic p-ideals, 
\item Ordinal ideals, 
\item Weiss ideals, 
\end{enumerate}
In particular, quotient over an ideal of this sort and any other Borel ideal 
can be isomorphic if and only if the ideals are isomorphic.  
\end{corollary}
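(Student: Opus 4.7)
The plan is to combine Theorem~\ref{T0} with the ZFC portion of the rigidity program, which already establishes (RC2) for the three target classes of ideals. Working in the forcing extension provided by Theorem~\ref{T0}, given any isomorphism $\Phi\colon \cP(\omega)/\cI \to \cP(\omega)/\cJ$ with $\cI$ a Borel ideal and $\cJ$ in one of the three listed classes, clause~(\ref{T0.2}) immediately supplies a continuous representation $F\colon \cP(\omega)\to \cP(\omega)$ of $\Phi$. The remaining task reduces to promoting this continuous representation to a genuine homomorphism representation.

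I would then appeal to the ZFC rigidity theorems of~\cite{Fa:AQ}, case by case. For nonpathological analytic p-ideals this is exactly the main triviality theorem of that paper: a continuous representation of an isomorphism between $\cP(\omega)/\cI$ and $\cP(\omega)/\cJ$, with $\cJ$ a nonpathological analytic p-ideal, can be refined to a Boolean algebra homomorphism representation (so $\Phi$ is trivial). For ordinal ideals $\cI_\alpha$ and Weiss ideals $\cW_\alpha$, the parallel theorems are developed in the same reference. These are absolute ZFC statements about Borel objects, so no extra absoluteness argument beyond Shoenfield is needed once the continuous representation is in hand.

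For the concluding ``In particular'' clause, one direction is trivial: an isomorphism of ideals $\cI\simeq\cJ$ induced by an almost-bijection of $\omega$ descends to an isomorphism of the quotients. For the converse, suppose $\Phi$ is a trivial isomorphism with homomorphism representation $h\colon \cP(\omega)\to\cP(\omega)$. A Boolean algebra homomorphism on $\cP(\omega)$ that projects to an isomorphism of Borel quotients is (by standard arguments going back to Rudin and used throughout~\cite{Fa:AQ}) induced modulo $\cI$ and $\cJ$ by an almost-permutation $\sigma$ of $\omega$, and any such $\sigma$ sends $\cI$ onto $\cJ$ setwise, yielding an isomorphism of the ideals as partial orders.

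The main obstacle I anticipate is verifying that the continuous-to-trivial step of~\cite{Fa:AQ} applies in the form needed here, namely for an isomorphism whose source ideal is an \emph{arbitrary} Borel ideal rather than one specifically matched to the target. For nonpathological analytic p-ideals this asymmetric form is already explicit in~\cite{Fa:AQ}. For ordinal and Weiss ideals one must check that the combinatorial arguments (coherent families on $\alpha$, topological copies of $\alpha$) that drive the triviality proofs only use structural features of the target ideal $\cJ$, not of $\cI$; this appears to be the case but is the one place where care is required before simply quoting the result.
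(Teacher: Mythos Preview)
Your approach is essentially identical to the paper's: invoke Theorem~\ref{T0}\eqref{T0.2} to obtain a continuous representation, then quote the ZFC ``continuous implies trivial'' results for each of the three target classes. The paper's proof is a two-line citation and nothing more.

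One correction on attribution: you credit all three cases to \cite{Fa:AQ}, but the paper cites \cite{Fa:AQ} together with \cite{KanRe:New} and \cite{KanRe:Ulam}. The nonpathological analytic p-ideal case is indeed in \cite{Fa:AQ}; the ordinal-ideal and Weiss-ideal cases are due to Kanovei and Reeken. Your worry in the final paragraph---whether those arguments go through with an arbitrary Borel source ideal $\cI$---is exactly what those references establish, so you should cite them rather than speculate that the proofs in \cite{Fa:AQ} can be adapted. The paper itself does not elaborate on the ``In particular'' clause; your sketch of it is fine in spirit, though you should be careful that ``trivial'' in this paper means ``has a homomorphism representation,'' and the passage from that to an almost-bijection witnessing $\cI\cong\cJ$ requires the structure theorems for such representations (again in \cite{Fa:AQ}).
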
 

\begin{proof} 
If an isomorphism $\Phi\colon \cP(\omega)/\cI\to \cP(\omega)/\cJ$ 
has a continuous representation and $\cJ$ is in one of the above classes, then 
$\Phi$ is trivial. This was proved in \cite{Fa:AQ}, \cite{KanRe:New}  and \cite{KanRe:Ulam}. 
\end{proof}

In the presence of sufficient large cardinals and
forcing absoluteness, the forcing notion used in the proof 
Theorem~\ref{T0} gives a stronger consistency result. 
Universally Baire sets of reals were defined in \cite{FeMaWo:uB}
and well-studied since. A reader not familiar with the theory of universally Baire sets 
 may safely skip all references
to them. 

\begin{theorem} \label{T0.uB} Assume there are class many Woodin cardinals. 
Then all conclusions of Theorem~\ref{T0}  
hold simultaneously for arbitrary universally Baire ideals in 
place of Borel ideals. 
\end{theorem}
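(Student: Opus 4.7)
The plan is to rerun the proof of Theorem~\ref{T0} essentially verbatim, systematically replacing ``Borel'' with ``universally Baire'' throughout and replacing the descriptive-set-theoretic inputs with the stronger absoluteness guaranteed by class many Woodin cardinals. The forcing construction itself, the countable support iteration of Suslin proper forcings outlined in \S\ref{S.Plan} and \S\ref{S.ProofT1}, requires no structural change; only the ``black box'' inputs on which that construction relies need to be upgraded.

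Concretely, I would first catalogue every place in the proof of Theorem~\ref{T0} where Borelness of $\cI$, $\cJ$, or of the representations being produced is used. I expect these uses to fall into three kinds: (a) canonical coding of an ideal by a real whose interpretation is unambiguous in every forcing extension; (b) the regularity properties explicitly flagged in the footnote to Theorem~\ref{T0} ($\bSigma^1_2$ and $\bPi^1_2$ property of Baire, and $\bPi^1_2$-uniformization in small proper extensions); and (c) Shoenfield-style absoluteness sufficient to transfer the relevant combinatorial statements between $V$ and the intermediate models of the iteration. Each of these has a universally Baire analogue: under class many Woodin cardinals, a universally Baire set $A$ admits canonical interpretations $A^{V[G]}$ in every set-forcing extension that cohere under iteration, and the theory of $(H_{\aleph_1},\in,A)$ is invariant under set forcing (Feng--Magidor--Woodin); moreover every universally Baire set has the property of Baire, and the uniformization and regularity principles needed in small proper extensions persist. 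Substituting these in for the Borel/measurable-cardinal inputs yields, step by step, an iteration whose intermediate models see the correct interpretation of each universally Baire ideal $\cI$ as an ideal on $\omega$, and in which the same combinatorial lemmas remain available.

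The main obstacle, as I see it, lies in Step (c) at the level of the \lbp{} framework that drives the iteration. One must verify that the ``continuous reading of names'' machinery for Suslin-proper iterands factors through the abstract properties of universally Baire sets (coherent tree representations of $A$ and its complement preserved through countable support iteration, plus property of Baire in every extension) rather than through features peculiar to Borel codes such as a bounded Borel rank. If any argument in the Borel case proceeds by induction on Borel rank, it must be replaced by an argument that operates uniformly via the trees witnessing universal Baireness; this is exactly the kind of replacement that class many Woodin cardinals make available, since they ensure that these trees and their projections behave correctly in every extension appearing in the iteration. Once this replacement is carried out, the conclusions of Theorem~\ref{T0}(\ref{T0.1})--(\ref{T0.wCH}) transfer without further modification.
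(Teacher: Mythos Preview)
Your overall strategy is correct and matches the paper's: use the identical forcing iteration, and replace the descriptive-set-theoretic regularity inputs (property of Baire, uniformization) by their universally Baire analogues, which class many Woodin cardinals supply. The paper's own proof sketch is hardly longer than a paragraph and does precisely this.

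However, you have misidentified the ``main obstacle.'' The continuous reading of names (Proposition~\ref{P.crn}) is a property of the \emph{forcing iteration} --- of the iterands $\bbQ_\bfx$ and $\calR$ --- and has nothing whatsoever to do with the ideals $\cI$ and $\cJ$. Its proof is a pure fusion argument about those specific Suslin posets; no Borel codes of ideals enter, and there is no induction on Borel rank to replace. That machinery carries over verbatim.

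The actual point deserving emphasis, which the paper singles out explicitly (see the end of \S\ref{S.Plan}), is the closure fact: under class many Woodin cardinals, \emph{every set projective in a universally Baire set is itself universally Baire}. Tracing through Lemmas~\ref{L0}, \ref{L2.1}, and \ref{L3}, the graph of $\Phi$ is shown to be definable by a formula that is projective with $\cI$ and $\cJ$ as parameters (the $\bDelta^1_2$ and $\bPi^1_2$ estimates in the Borel case become ``projective in $\cI,\cJ$'' estimates). Closure under projective definability then makes this graph universally Baire, hence Baire-measurable, hence uniformizable by a continuous function on a comeager set, and \cite[Lemma~1.3.2]{Fa:AQ} finishes as before. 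Your items (a)--(c) are fine as a checklist, but the proof really hinges on this single closure principle rather than on any delicate reworking of the iteration's internal mechanics.
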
 

The proof of Theorem~\ref{T0.uB} will be  sketched in \S\ref{S.uB}. 

\subsection{The plan}\label{S.Plan} We now roughly outline the proof of Theorem~\ref{T0}. 
Starting from a model of CH force with a countable support iteration of
creature forcings $\Qx$ (\S\ref{S.CR}) and standard posets for adding a Cohen real, $\calR$. 
The iteration has length $\aleph_2$ and each of these forcings occurs on a stationary set of 
ordinals of uncountable cofinality specified in the ground model. 
Forcing $\Qx$ adds a real which destroys homomorphisms between Borel quotients that are not 
locally topologically trivial (\S\ref{S.Defs}).  

Now consider an isomorphism $\Phi\colon \cP(\bbN)/\cI\to \cP(\bbN)/\cJ$ between quotients
over Borel ideals $\cI$ and $\cJ$. 
By the standard reflection arguments (\S\ref{S.Ref}) and using the above property of $\Qx$ 
we show that $\Phi$ is locally topologically trivial. 
Finally, a locally topologically trivial automorphism that survives adding random reals has a continuous 
representation (Lemma~\ref{L3}). 

In order to make all this work we need to assure that the forcing iteration is sufficiently 
definable. In particular we have the continuous reading of names  (\S\ref{S.CRN}). 
A simplified version of the forcing notion with additional applications appears in 
\cite{Gha:FDD}. 

The forcing notion used to prove   Theorem~\ref{T0.uB} is identical to the one used in 
Theorem~\ref{T0}. 
With an  additional absoluteness assumptions the main result of this paper can be extended
to a class of ideals larger than Borel. 
We shall need the fact that, assuming the existence of 
class many Woodin cardinals, all projective sets of reals are universally Baire
and, more generally, that every set projective in a universally Baire set is universally Baire. 
Proofs of these results use Woodin's stationary tower forcing and they can be found in 
\cite{Lar:Stationary}.

\subsection{Notation and conventions}
Following \cite{Sh:630} we denote the theory obtained from ZFC by removing the power set axiom 
and adding `$\beth_\omega$ exists' by 
$\ZFC$. 

We frequently simplify and abuse the notation and write 
$\Phi\rs a$ instead of the correct $\Phi\rs \cP(a)/(a\cap \cI)$ when 
 $\Phi\colon \cP(\omega)/\cI\to \cP(\omega)/\cJ$ 
 and $a\subseteq \omega$. 

If $\cI$ is an ideal on $\bbN$ then $=^\cI$ denotes the equality modulo $\cI$ on $\cP(\bbN)$.  

As customary in set theory, interpretation of the
 symbol $\bbR$ (`the reals') depends from the context. 
It  may denote $\cP(\omega)$, $\omega^\omega$, or any other recursively presented Polish space. 
Set-theoretic terminology and notation are standard, as in \cite{Ku:Book}, \cite{Sh:f} or \cite{Kana:Book}. 

\section{Local triviality} \label{S.Defs}
\label{S.LBP} % ? 

We start by   gathering a couple of soft results about representations of homomorphisms. 
A homomorphism $\Phi\colon \cP(\omega)/\cI\to \cP(\omega)/\cJ$ 
is $\bDelta^1_2$ if the set 
\[
\{(a,b): \Phi([a]_\cI)=[b]_\cJ\}
\]
 includes a  $\bDelta^1_2$ set $\cX$ such that  for every $a$ there exists $b$ for which $(a,b)\in \cX$.  
  We similarly define when $\Phi$ is  
Borel, $\bPi^1_2$, or in any other pointclass.

For a  homomorphism 
$\Phi\colon \cP(\omega)/\cI\to \cP(\omega)/\cJ$ 
consider the ideals
\[
\Triv_\Phi^0=\{a\subseteq\omega: \Phi\rs a\text{ is trivial}\},
\]
\[
\Triv_\Phi^1=\{a\subseteq\omega: \Phi\rs a\text{ has a continuous representation}\}, 
\]
and
\[
\Triv_\Phi^2=\{a\subseteq\omega: \Phi\rs a\text{ is $\bDelta^1_2$}\}. 
\]
We say that $\Phi$ is \emph{locally trivial} if $\Triv_\Phi^0$ is nonmeager,  
that it is \emph{locally topologically trivial} if $\Triv_\Phi^1$ is nonmeager and
hat it is \emph{locally $\bDelta^1_2$} if $\Triv_\Phi^2$ is nonmeager.

 By \cite[Theorem~3.3.5]{Fa:AQ} a fairly weak consequence of PFA 
  implies every homomorphism 
between quotients over Borel p-ideals is locally continuous (and a bit more). 
See \cite{Fa:AQ} for additional definitions.

 By a well-known result of Jalali--Naini and Talagrand (for a proof 
see \cite{BarJu:Book} or \cite[Theorem~3.10.1]{Fa:AQ}) for each meager 
ideal $\cI$ that includes $\Fin$
there is a partition $\vec I=(I_n: n\in \omega)$ of $\omega$ into finite intervals 
such that for every infinite $c\subseteq \omega$ the set 
$\vec I_c=\bigsqcup_{n\in C} I_n$ is positive.
In other words, the ideal $\cI$ is meager if and only if for some partition $\vec I$ of $\omega$ into finite intervals 
$\cI$ is included in the hereditary $F_\sigma$ set  
\[
\calH(\vec I)=\{a\subseteq \omega: (\forall^\infty n) I_n\not \subseteq a\}.
\]
We say that $\vec I$ \emph{witnesses} $\cI$ is meager. 
If $\cI$ is an ideal that has the property of Baire and includes $\Fin$, then 
it is necessarily meager.

 The following is a well-known consequence of the above. 
 
 \begin{lemma} \label{L.y1} 
 Assume $\cI$ is a Borel ideal and $\cK$ is a nonmeager ideal. 
 Then for every $c\in \cI^+$ there is $d\in \cK$ such that $c\cap d\in \cI^+$. 
 \end{lemma}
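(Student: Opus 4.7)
The plan is to reduce the statement to the Jalali--Naini/Talagrand dichotomy already recalled in the text. Suppose, for contradiction, that for every $d \in \cK$ one has $c \cap d \in \cI$. Define
\[
\cK' = \{d \subseteq \omega : c \cap d \in \cI\}.
\]
I would first check that $\cK'$ is an ideal on $\omega$: closure under subsets is immediate since $\cI$ is hereditary, and closure under finite unions follows from $c \cap (d_1 \cup d_2) = (c \cap d_1) \cup (c \cap d_2)$ together with the fact that $\cI$ is closed under finite unions. Moreover $\cK' \supseteq \Fin$ because $c \cap d$ is finite whenever $d$ is finite, and any ideal (in particular $\cI$) contains the finite sets.

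Next I would observe that $\cK'$ is Borel. Indeed, $d \mapsto c \cap d$ is continuous on $\cP(\omega)$, and $\cK' = \{d : c \cap d \in \cI\}$ is the preimage of the Borel set $\cI$ under this continuous map. In particular $\cK'$ has the property of Baire, so by the remark following Jalali--Naini/Talagrand quoted in the text ("an ideal with the property of Baire containing $\Fin$ is meager"), $\cK'$ is meager.

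By our contradiction assumption, $\cK \subseteq \cK'$, so $\cK$ is contained in a meager set and is therefore meager, contradicting the hypothesis that $\cK$ is nonmeager. This gives the required $d \in \cK$ with $c \cap d \in \cI^+$. I don't expect a real obstacle here; the only thing to watch is the conventions on ideals (ideals are implicitly assumed to extend $\Fin$) and to invoke the right form of Jalali--Naini/Talagrand, which the paper has just stated in the paragraph above.
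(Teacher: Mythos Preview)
Your proof is correct and, if anything, cleaner than the paper's. Both arguments ultimately rest on the Jalali--Naini/Talagrand dichotomy, but they apply it differently. The paper works directly: it restricts $\cI$ to $c$, chooses a partition $c=\bigsqcup_n I_n$ witnessing that $\cI\cap\cP(c)$ is meager, extends this to a partition $\omega=\bigsqcup_n J_n$ with $J_n\cap c=I_n$, and then uses nonmeagerness of $\cK$ to find $d\in\cK$ containing $J_n$ for infinitely many $n$; this $d$ is the witness. Your argument is the contrapositive packaging of the same idea: you observe that $\cK'=\{d:c\cap d\in\cI\}$ is a proper Borel ideal (properness coming from $c\in\cI^+$), hence meager, so the nonmeager $\cK$ cannot be contained in it. Your route avoids naming the partition explicitly and makes the dependence on the Borelness of $\cI$ more transparent via the continuous preimage; the paper's route is slightly more constructive in that it actually exhibits the shape of the witnessing $d$. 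Either way the content is the same.
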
 

\begin{proof} Since the ideal $\cI\cap \cP(c)$ is a proper Borel ideal on $c$, 
it is meager and we can find a partition of $c$ into intervals $c=\bigsqcup_n I_n$
such that $\bigsqcup_{n\in y} I_n\notin \cI$ for every infinite $y\subseteq \omega$. 
Let $\omega=\bigsqcup_n J_n$ be a partition such that $J_n\cap c=I_n$ for all $n$. 
Since $\cK$ is nonmeager, there is an infinite $y$ such that $d=\bigcup_{n\in y} J_n$
belongs to $\cK$. Then $d\cap c=\bigsqcup_{n\in y} I_n$ is not in $\cI$ and
therefore $d$ is as required. 
\end{proof}

The assumption of the following lemma follows from the assumption that there exists 
a measurable cardinal by \cite{MaSo:Basis}. 

\begin{lemma}\label{L-1} 
Assume  that 
all $\bSigma^1_2$ sets of reals have the property of Baire.
If a homomorphism 
$\Phi\colon \cP(\omega)/\cI\to \cP(\omega)/\cJ$ 
is $\bDelta^1_2$ then it has a continuous representation. \end{lemma}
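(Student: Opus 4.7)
The plan is a two-step reduction: first, from $\bDelta^1_2$-representability to Baire-measurable representability; second, from Baire-measurable to continuous representability.

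For the first step, $\bSigma^1_2$-BP yields BP for $\bDelta^1_2$ sets (since BP is closed under complement). The $\bDelta^1_2$ uniformization $\cX$ of $\Phi$ has nonempty vertical sections by assumption, and a standard uniformization argument, using BP in place of universal measurability (as in Jankov--von Neumann), extracts a Baire-measurable function $F\colon\cP(\omega)\to\cP(\omega)$ with graph contained in $\cX$. Then $F$ is a Baire-measurable representation of $\Phi$, and each coordinate set $B_n:=\{a:n\in F(a)\}$ has BP.

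For the second step, I would exploit the group structure of $(\cP(\omega),\triangle)$. The homomorphism identity $F(a\triangle b)=^\cJ F(a)\triangle F(b)$ implies that each characteristic function $\chi_{B_n}\colon (\cP(\omega),\triangle)\to(\bbZ/2,+)$ is an approximate group homomorphism with error controlled by $\cJ$. A Pettis-type argument, combined with the Baire-measurability of $\chi_{B_n}$, forces $B_n$ to agree modulo meager with the kernel of a continuous group homomorphism, i.e., a set of the form $B_n^*:=\{a:|a\cap I_n|\text{ odd}\}$ for some finite $I_n\subseteq\omega$. The function $F'(a):=\{n:a\in B_n^*\}$ is then continuous by inspection, and descends to a well-defined homomorphism between the quotients.

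The main obstacle is verifying that $F'(a)=^\cJ F(a)$ for \emph{every} $a$, not merely on a comeager set. This requires converting the comeager statements ``$B_n\triangle B_n^*$ is meager'' (one for each $n$) into the uniform statement ``$\{n:a\in B_n\triangle B_n^*\}\in\cJ$'' for every $a$. The passage invokes the homomorphism law together with the meagerness of $\cJ$, which is a proper Borel ideal containing $\Fin$ and hence meager by the Jalali--Naini--Talagrand characterization (cf.\ Lemma~\ref{L.y1}), via a Kuratowski--Ulam translation argument that constitutes the technical core of the proof.
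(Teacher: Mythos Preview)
Your two-step outline matches the paper's, but the paper's execution is much shorter and your Step~2 contains a real gap.

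For Step~1, the paper simply invokes the Novikov--Kondo--Addison $\bSigma^1_2$-uniformization theorem (a ZFC result) to obtain a representation $F$ with $\bSigma^1_2$ graph; under the hypothesis that $\bSigma^1_2$ sets have the Property of Baire, $F$ is Baire-measurable. Your appeal to a ``Jankov--von Neumann''-style selection is shakier: that theorem is for analytic sets, and there is no off-the-shelf BP analogue for $\bDelta^1_2$ relations. Kondo is the correct and direct tool here.

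For Step~2, the paper just cites the known fact (\cite[Lemma~1.3.2]{Fa:AQ}) that a Baire-measurable representation of a homomorphism between Borel quotients can be replaced by a continuous one. Your proposed proof of this fact via a coordinate-by-coordinate Pettis argument does not work. For a fixed $n$, the map $\chi_{B_n}\colon \cP(\omega)\to\bbZ/2\bbZ$ is \emph{not} an approximate homomorphism in any usable sense: the identity $F(a\triangle b)=^{\cJ}F(a)\triangle F(b)$ says only that for each pair $(a,b)$ the set of $n$ at which additivity fails lies in $\cJ$. This gives no control over any individual coordinate $n$, so there is no reason $B_n$ should agree modulo meager with a set of the form $\{a:|a\cap I_n|\text{ odd}\}$. (Indeed, continuous representations need not be group homomorphisms at all.)

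The correct argument---which is essentially what you describe in your final paragraph as a mere finishing touch---is the whole of Step~2: restrict $F$ to a comeager $G_\delta$ set $G$ on which it is continuous, and for arbitrary $a$ choose $b\in G\cap(a\triangle G)$ and set $F'(a)=F(b)\triangle F(a\triangle b)$. The homomorphism law modulo $\cJ$ makes this well-defined modulo $\cJ$ and continuous. Your ``translation argument'' is not the technical core that follows a Pettis step; it \emph{replaces} the Pettis step entirely.
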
 

\begin{proof} By the Novikov--Kondo--Addison uniformization theorem, 
$\Phi$ has a $\bSigma^1_2$ representation. Since this map is Baire-measurable, 
by 
 a well-known fact 
(e.g.,  \cite[Lemma~1.3.2]{Fa:AQ}) 
$\Phi$ has a continuous representation. 
\end{proof}

Given a partition $I=(I_n: n\in \omega)$ of $\omega$ into finite intervals we say that a forcing
notion $\bbP$ \emph{captures} $I$ if there is a $\bbP$-name $\dot r$ for a subset of $\omega$
such that for every $p\in \bbP$ there is an infinite $c\subseteq \omega$ with the following 
property: 
\begin{enumerate} 
\item For every $d\subseteq \bigcup_{n\in c} I_n$ there is $q_d\leq p$ such that 
$q_d$ forces 
\[
\textstyle \dot r\cap \bigcup_{n\in c} \check I_n=\check d.
\] 
\end{enumerate}
 By $[a]_\cI$ we denote the 
equivalence class of set $a$ modulo the ideal $\cI$. When the ideal is clear from the context 
we may write $[a]$ instead of $[a]_\cI$.

\section{Creatures}\label{S.CR}  

Two suslin proper forcing notions are used in theÊ proof of Theorem~\ref{T0}. 
One is the Lebesgue measure 
algebra, $\calR$. 
The other shall be described in the present section 
It is a creature forcing (for background see \cite{RoSh:470}).

Fix a partition $I=(I_n: n\in \omega)$ of $\omega$ into consecutive finite intervals. 
Also fix another fast partition $J=(J_n: n\in \omega)$ into consecutive finite intervals. 
For $s\subseteq \omega$ write 
\[
\textstyle I_s=\bigcup_{j\in s} I_j\text{ and } 
I_{<n}=\bigcup_{j<n} I_j. 
\]
 Let $\bfx$ denote the pair $(I,J)$, 
called `relevant parameter.' 
Define $(\CRx,\Sigx)$ as follows (in terms of \cite{RoSh:470}, this will be a `creating pair').  

Let $\fc\in\CRx$ if 
\[
\fc=(n_\fc,u_\fc,\eta_\fc,\cF_\fc,m_\fc,k_\fc)
\]
(we omit the subscript $\fc$ whenever it is clear from the context) 
provided the following conditions hold
\begin{enumerate}
\item $u\subseteq J_n$, 
\item $\eta: I_u\to \{0,1\}$, 
\item $\cF\subseteq \{0,1\}^{I_{J_n}}$ and each $\mu\in \cF$ extends $\eta$, 
\item\label{I.CR.k}  $k\leq |J_n|-|u|$, 
\item if $v\subseteq J_n\setminus u$, $|v|\leq k$, and 
$\nu\colon I_v\to \{0,1\}$ then some $\mu\in \cF$ extends $\eta\cup \nu$, 
\item \label{I.CR.m} $m< 3^{-|I_{<n}|}\log_2 k$
\pushcounter
\end{enumerate}
For $\fc$ and $\fd$ in   $\CRx$ let $\fd\in \Sigx(\fc)$ if the following conditions
hold
\begin{enumerate}
\popcounter
\item $n_\fd=n_\fc$, 
\item $\eta_\fc\subseteq \eta_\fd$, 
\item $k_\fc\geq k_\fd$,
\item $\cF_\fc\supseteq \cF_\fd$, 
\item $m_\fc\leq m_\fd$. 
\pushcounter
\end{enumerate}
For $\fc\in \CRx$ we define the following
\begin{enumerate}
\popcounter
\item $\nor_0(\fc)=\lfloor 3^{-|I_{<n}|}\log_2k \rfloor$
\item $\nor(\fc)=\nor_0(\fc)-m$,  
\item\label{c.pos}  $\pos(\fc)=\cF$. 
\pushcounter
\end{enumerate}
Therefore $\fc$ is a finite `forcing notion' that `adds' a function from $I_{J_n}$ into $\{0,1\}$. 
Its `working part' (or, the already decided part of the `generic' function) is $\eta_\fc$
and $\cF_\fc$ is the set of `possibilities' for the generic function (thus the redundant 
notation
\eqref{c.pos} included here for the purpose of compatibility with \cite{RoSh:470}). 
The `norm' $\nor(\fc)$ provides a lower bound on the amount of freedom
allowed by $\fc$ in determining the generic function.

For a relevant parameter $\bfx$ we now define the creature forcing $\bbQ=\bbQ_\bfx$. 
Let $\bfH(n)=2^k$, where $k=I_{J_n}$. This is the number of `generics' for $\fc\in \Sigx$ with $n_\fc=n$. 
Also let
\[
\textstyle\phi_{\bfH}(j)=|\prod_{i<j} \bfH(i)|. 
\]
Fix a function $f\colon \omega\times\omega\to\omega$ which satisfies the following
conditions for all $k$ and $l$ in $\omega$: 
\begin{enumerate}
\popcounter
\item $f(k,l)\leq f(k,l+1)$, 
\item $f(k,l)<f(k+1,l)$, 
\item $\phi_{\bfH}(l)(f(k,l)+\phi_{\bfH}(l)+2)<f(k+1,l)$.
\pushcounter
\end{enumerate}
We say such $f $ is  \emph{$\bfH$-fast} (cf. \cite[Definition~1.1.12]{RoSh:470}). 
 
 We now let $\bbQ_{\bfx}$ be $\bbQ_f(\CRx,\Sigx)$, as in \cite[Definition~1.1.10 (f)]{RoSh:470}. 
 This means that a typical condition in $\bbQ$ is a triple 
 \[
 p=(f_p,i(p),\bar \fc(p))
 \] 
such that (we drop subscript $p$ when convenient)
\begin{enumerate}
\popcounter
\item $f\colon I_{<i(p)}\to \{0,1\}$ for some $i(p)\in \omega$,  
\item $\bar\fc(p)=\langle \fc(p,j): j\geq i(p)\rangle$
\item Each $\fc(p,j)$ is in $\CRx$ and satisfies $n_{\fc(p,j)}=j$, 
\item \label{p.nor} with 
 $m_j=\min( I_{\min (J_j)})$
(cf. \cite[Definition~1.1.10(f)]{RoSh:470}) we have
\[
(\forall k)(\forall^\infty j)(\nor(\fc(p,j))>f(k,m_j).
\]
\pushcounter
\end{enumerate}
We let $q\leq p$ (where $q$ is a condition stronger than $p$) if the following conditions
are satisfied
\begin{enumerate}
\popcounter
\item $f_p\subseteq f_q$, 
\item $\fc(q,j)\in \Sigx(\fc(p,j))$ for $j\geq i(q)$, 
\item $f_q\rs I_j\in \pos(\fc(p,j))$ for $j\in [i(p),i(q))$. 
\end{enumerate} 
The idea is that $\bbQ_\bfx$ adds a function $\dot f$ from $\omega$ into $\{0,1\}$. 
A condition $p=(f_p, i(p), \bar\fc(p))$ decides that $\dot f$ extends $f_p$ as well as 
$f_{\fc(p,j)}$ for all $j\geq i(p)$. Also, $\pos(\fc(p,j))$ is the set of possibilities for
the restriction of $\dot f$ to $I_{J_j}$. The `norms on possibilities' 
condition \eqref{p.nor} affects the `rate' at which decisions are being made. 

 Experts may want to take note that with our creating pair $(\CRx,\Sigx)$ there is no difference between 
 $\bbQ_f$ and $\bbQ_f^*$ (cf. \cite[Definition~1.2.6]{RoSh:470}) since the intervals $J_n$ form a partition 
 of $\omega$. This should be noted since the results from
  \cite{RoSh:470} quoted below apply to $\bbQ_f^*$ and not $\bbQ_f$ in general.

\subsection{Properties of $\bbQ_\bfx$}
We shall need several results from \cite{RoSh:470} where the class of forcings
to which $\bbQ_\bfx$ belongs was introduced and studied.

\begin{lemma}\label{L.CR.0} The forcing notion $\bbQ_\bfx$ is nonempty and nonatomic. 
\end{lemma}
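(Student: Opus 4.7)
The plan is to verify the two claims by explicit construction, exploiting the ``fast'' growth of the partition $J$ (which makes creatures of arbitrarily large norm available) and the flexibility clause~(5) in the definition of $\CRx$.

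For nonemptiness I would construct a distinguished condition $p_0$ with $i(p_0)=0$, $f_{p_0}=\emptyset$ and, at each level $j$, the most generous creature $\fc_j$: $n_{\fc_j}=j$, $u_{\fc_j}=\emptyset$, $\eta_{\fc_j}=\emptyset$, $\cF_{\fc_j}=\{0,1\}^{I_{J_j}}$, $k_{\fc_j}=|J_j|$, $m_{\fc_j}=0$. Clauses~(1)--(5) in the definition of $\CRx$ are immediate, and (6) holds as soon as $|J_j|\geq 2$. Because $J$ is fast, $|J_j|$ outgrows any iterated exponential in $|I_{<j}|$, so $\nor(\fc_j)=\lfloor 3^{-|I_{<j}|}\log_2 |J_j|\rfloor$ eventually dominates $f(k,m_j)$ for every fixed $k$, verifying the norm clause~\eqref{p.nor} and placing $p_0$ in $\bbQ_\bfx$.

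For nonatomicity I would fix $p\in\bbQ_\bfx$ and produce two incompatible refinements. The norm clause forces $\nor(\fc(p,j))\to\infty$, so I pick a level $j^*\geq i(p)$ at which $\nor_0(\fc(p,j^*))$ is much larger than $m_{\fc(p,j^*)}+1$. Write $\fc=\fc(p,j^*)$; since $k_\fc\geq 2$ (clause~(6) forces $\log_2 k_\fc>0$), the set $J_{j^*}\setminus u_\fc$ is nonempty, so pick some $j_0$ in it. Applying clause~(5) with $v=\{j_0\}$ to the two constant functions on $I_{j_0}$ produces $\mu_0,\mu_1\in\cF_\fc$ whose restrictions to $I_{j_0}$ disagree. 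For $l\in\{0,1\}$ define a refined creature $\fd_l$ by $u_{\fd_l}=u_\fc\cup\{j_0\}$, $\eta_{\fd_l}=\eta_\fc\cup(\mu_l\rs I_{j_0})$, $\cF_{\fd_l}=\{\mu\in\cF_\fc:\mu\rs I_{j_0}=\mu_l\rs I_{j_0}\}$, $k_{\fd_l}=k_\fc-1$, $m_{\fd_l}=m_\fc$. A direct check using clause~(5) for $\fc$ shows that $\fd_l$ still satisfies~(1)--(5) and lies in $\Sigx(\fc)$. Let $q_l$ be obtained from $p$ by replacing $\fc(p,j^*)$ by $\fd_l$. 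Then $q_l\leq p$, and any common extension of $q_0$ and $q_1$ would be forced to make its generic $\dot f$ agree with both $\mu_0\rs I_{j_0}$ and $\mu_1\rs I_{j_0}$, which is impossible; hence $q_0\perp q_1$.

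The one delicate point is that clause~(6) for $\fd_l$ and the global clause~\eqref{p.nor} for $q_l$ must survive the drop $k_\fc\mapsto k_\fc-1$. This is precisely why I chose $j^*$ with so much norm to spare: $3^{-|I_{<j^*}|}\log_2(k_\fc-1)$ differs from $3^{-|I_{<j^*}|}\log_2 k_\fc$ by a negligible quantity once $k_\fc$ is large, and the sequence $\bar\fc(q_l)$ is identical to $\bar\fc(p)$ at every other level, so \eqref{p.nor} is automatic. If one wished to be more generous, one could instead replace $k_\fc$ by $\lceil k_\fc/2\rceil$; either way the bookkeeping is routine and both assertions of the lemma follow. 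Indeed, both are special cases of general nonatomicity results for finite-norm creature forcings $\bbQ_f(\CRx,\Sigx)$ in~\cite{RoSh:470}.
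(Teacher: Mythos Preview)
Your argument is correct. The paper states this lemma without proof, treating both assertions as evident from the setup and the general framework of \cite{RoSh:470}; your explicit verification is a legitimate way to fill in the details, and your identification of the informal word ``fast'' for $J$ as precisely the hypothesis needed to make $\nor(\fc_j)\to\infty$ (and hence to satisfy \eqref{p.nor}) is exactly right.

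One harmless simplification for nonatomicity: rather than refining a creature at level $j^*$ and tracking the effect of $k_\fc\mapsto k_\fc-1$ on clause~(6), you can instead extend the trunk. Pick two distinct $\mu_0,\mu_1\in\pos(\fc(p,i(p)))$ (these exist by clause~(5), since clause~(6) forces $k\geq 2$ and hence $J_{i(p)}\setminus u\neq\emptyset$), and set $q_l=(f_p\cup\mu_l,\,i(p)+1,\,\langle\fc(p,j):j\geq i(p)+1\rangle)$. Then $q_l\leq p$, the tail of creatures is unchanged so \eqref{p.nor} is automatic, and $q_0\perp q_1$ because their trunks disagree. This sidesteps the norm bookkeeping entirely, though your approach is equally valid.
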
 

Given $h\colon \omega\to \omega$ (typically increasing), we say that 
the creating pair $(\CRx,\Sigx)$ is \emph{$h$-big} (\cite[Definition~2.2.1]{RoSh:470}) 
if for each $\fc\in \CRx$  such that $\nor(\fc)>1$  and $\chi\colon \pos(\fc)\to h(n(\fc))$ 
there is $\fd\in\Sigx(\fc)$ such that $\nor(\fd)\geq \nor(\fc)-1$ and $\chi\rs\pos(\fd)$ is 
constant. We need only $h$-bigness in the case when $h(n)=2$ for all $n$.

\begin{lemma}\label{L.CR.1} If $h(n)=3^{|I_{<n}|}$ then 
the pair $(\CRx,\Sigx)$ is $h$-big. 
\end{lemma}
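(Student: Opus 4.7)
\emph{Plan.} Fix $\fc\in\CRx$ with $\nor(\fc)>1$ and $\chi\colon\pos(\fc)\to h(n_\fc)$, where $h(n)=3^{|I_{<n}|}$. I will construct $\fd\in\Sigx(\fc)$ keeping $n_\fd=n_\fc$ and $m_\fd=m_\fc$, so the requirement $\nor(\fd)\geq\nor(\fc)-1$ reduces to $\nor_0(\fd)\geq\nor_0(\fc)-1$; this will follow once $\log_2 k_\fd\geq\log_2 k_\fc-3^{|I_{<n_\fc}|}$. Fix a binary encoding of the $h(n_\fc)$ colors using $N:=\lceil|I_{<n_\fc}|\log_2 3\rceil$ bits. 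Since $N\leq 2|I_{<n_\fc}|<3^{|I_{<n_\fc}|}$ for $|I_{<n_\fc}|\geq 1$, losing one factor of $2$ in $k$ per bit falls well within the budget.

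The core is a \emph{two-coloring step}: given $\fc^*\in\CRx$ with bigness at $k^*$ and $\chi^*\colon\cF_{\fc^*}\to\{0,1\}$, produce $\fc^{**}\in\Sigx(\fc^*)$ with $\chi^*$ constant on $\cF_{\fc^{**}}$, $m_{\fc^{**}}=m_{\fc^*}$, and $k_{\fc^{**}}\geq\lfloor k^*/2\rfloor$. Let $\cF^*_0:=(\chi^*)^{-1}(0)\cap\cF_{\fc^*}$. If $\cF^*_0$ already has bigness at $\lfloor k^*/2\rfloor$ relative to $(u_{\fc^*},\eta_{\fc^*})$, I take $\cF_{\fc^{**}}:=\cF^*_0$ with all other parameters unchanged except $k_{\fc^{**}}:=\lfloor k^*/2\rfloor$, obtaining a color-$0$ sub-creature. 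Otherwise some witness $(v_0,\nu_0)$ with $v_0\subseteq J_{n_{\fc^*}}\setminus u_{\fc^*}$ and $|v_0|\leq\lfloor k^*/2\rfloor$ has no color-$0$ extension in $\cF_{\fc^*}$; then I set $u_{\fc^{**}}:=u_{\fc^*}\cup v_0$, $\eta_{\fc^{**}}:=\eta_{\fc^*}\cup\nu_0$, $\cF_{\fc^{**}}:=\{\mu\in\cF_{\fc^*}:\mu\supseteq\eta_{\fc^{**}}\}$, and $k_{\fc^{**}}:=k^*-|v_0|\geq\lceil k^*/2\rceil$. The family $\cF_{\fc^{**}}$ is automatically contained in $(\chi^*)^{-1}(1)$ by the choice of witness, and its bigness at $k_{\fc^{**}}$ follows by applying bigness of $\fc^*$ at $k^*$ to the pair $(v\cup v_0,\nu\cup\nu_0)$ for any test $(v,\nu)$ with $v\subseteq J_n\setminus u_{\fc^{**}}$ and $|v|\leq k_{\fc^{**}}$, since the supports are disjoint and $|v\cup v_0|\leq k^*$.

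Setting $\fc^{(0)}:=\fc$ and iterating the two-coloring step $N$ times, using the $i$-th bit of the color encoding at iteration $i$, yields $\fd:=\fc^{(N)}\in\Sigx(\fc)$ with $\chi$ constant on $\cF_\fd$ and, by a simple induction, $k^{(N)}\geq k_\fc/2^N-1$; hence $\log_2 k_\fd\geq\log_2 k_\fc-N-1$, giving $\nor_0(\fd)\geq\nor_0(\fc)-1$ as required. The main obstacle I expect is the bookkeeping needed to verify at each iteration that $\fc^{**}$ satisfies all clauses of the definition of $\CRx$ -- in particular $k_{\fc^{**}}\leq|J_n|-|u_{\fc^{**}}|$ (automatic from the construction, as $u$ only grows by a $v_0$ disjoint from the current $u_{\fc^*}$) and the strict inequality $m_{\fc^{**}}<3^{-|I_{<n}|}\log_2 k_{\fc^{**}}$ of clause \eqref{I.CR.m}, which is preserved because $\nor(\fc)>1$ supplies the initial slack $\log_2 k_\fc\geq 3^{|I_{<n_\fc}|}(m_\fc+2)$, dwarfing the cumulative loss $N+1$ in $\log_2 k$ through the iteration.
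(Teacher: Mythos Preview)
Your proof is correct. The two-coloring step is sound: in Case~2 you correctly verify that $\cF_{\fc^{**}}$ inherits bigness at level $k^*-|v_0|$ from $\fc^*$ by padding any test pair $(v,\nu)$ with the witness $(v_0,\nu_0)$, and the $\Sigx$ and $\CRx$ clauses are all preserved through the iteration. The norm arithmetic also works out, with the one small omission that the case $|I_{<n_\fc}|=0$ (where $h(n_\fc)=1$ and the statement is trivial) should be disposed of explicitly before invoking $N\leq 2|I_{<n_\fc}|$.

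Your route differs from the paper's, however. Rather than encoding the $r=3^{|I_{<n}|}$ colors in $N\approx\log_2 r$ bits and halving $k$ at each of $N$ binary stages, the paper does a single $r$-way pass: setting $k'\approx k/r$, it tests each color class $\cF_j$ in turn for bigness at level $k'$; if some $\cF_j$ passes, that gives $\fd$ immediately, and if all fail, the union of the $r$ failure witnesses has total size $\leq r k'\leq k$ yet admits no extension in $\cF=\bigcup_j\cF_j$, contradicting bigness of $\fc$. The two arguments reach essentially the same value of $k_\fd$ (since $2^N\approx r$), so the norm drop is the same; the paper's version is shorter because it avoids the bit-encoding and the inductive bookkeeping, while your version has the minor pedagogical advantage of isolating a clean two-coloring step that could be reused elsewhere.
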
 

\begin{proof} Fix $\fc=(n,u,\eta,\cF,m,k)\in \CRx$ such that $\nor (\fc)=\lfloor 2^{-|I_{<n}|}\log_2 k\rfloor -m>0$
and a partition $\cF=\bigcup_{j<r} \cF$, with $r=3^{|I_{<n}|}$. 
We need to find $\fd\in \Sigx(\fc)$ such that $\nor(\fd)\geq \nor(\fc)-1$ and 
$\cF_{\fd}\subseteq \cF_j$ for some $j$. 

Since $\nor(\fc)=\lfloor r \log_2 k\rfloor -m>0$, 
we have that $\log_2 k\geq r$ and therefore  
$k'=\lceil r k\rceil>0$.

We shall find $\fd$ of the form $(n,v, \zeta, \cF_j, m, k')$ for appropriate 
$v, \zeta$ and $j<r$. Note that $\nor(\fd)=\lfloor r\log_2 k'\rfloor -m =\nor(\fc)-1$. 
We shall try to find $u_j$ and $\eta_j\colon u_j\to \{0,1\}$ for $j<r$ as follows. 
If $\fd_0=(n,u,\eta, \cF_0, m,k')\in \Sigx(\fc)$, we let $\fd=\fd_0$ and stop. 
Otherwise, there are  $v_0\subseteq J_n\setminus u$ and $\zeta_0\colon v_0\to \{0,1\}$
such that $\eta\cup \zeta_0$ has no extension in $\cF_0$. 
Let $u_1=u\cup v_0$ and $\eta_1=\eta\cup \zeta_0$. 
If $\fd_1=(n,u_1,\eta_1,\cF_1, m,k')\in \Sigx(\fc)$, we let $\fd=\fd_1$ and stop. 
Otherwise, there are $v_1\subseteq J_n\setminus u_1$ and $\zeta_1\colon v_1\to \{0,1\}$
such that $\eta_1\cup \zeta_1$ has no extension in~$\cF_1$. 
Let $u_2=u_1\cup v_1$ and $\eta_2=\eta\cup \zeta_1$. 
Proceeding in this way, for $j<r$  we construct $v_j,u_j,\zeta_j$ 
and $\eta_j$ such that $\eta_j$ has no extension 
in $\cF_j$ or we find $\fd_j$ witnessing $r$-bigness of $\fc$. 
If $u_j$ and $\eta_j$ are constructed for $j<r-1$, then $v=\bigcup_{j<r} v_j$ has
cardinality $r k'=k$ and $\nu=\bigcup_{j<r} \zeta_j$ has no extension in~$\cF$. 
But this contradicts the assumption \eqref{I.CR.k} on $\fc$. 
Therefore one of $\fd_j$ is as required. 
\end{proof}

A creating pair $(\CRx,\Sigx)$ has the \emph{halving property} (\cite[Definition~2.2.7]{RoSh:470}) 
if for each $\fc\in \CRx$ such that $\nor(\fc)>0$ there is $\fd\in \Sigx(\fc)$ (usually 
denoted $\half(\fc)$) such that 
\begin{enumerate}
\item $\nor(\fd)\geq \frac 12 \nor(\fc)$, 
\item If in addition $\nor(\fc)\geq 2$ then for each $\fd_1\in \Sigx(\fd)$ such that 
$\nor(\fd_1)>0$ there is $\fc_1\in \Sigx(\fc)$ such that 
$\nor(\fc_1)\geq \frac 12 \nor(\fc)$ and $\pos(\fc_1)\subseteq \pos(\fd_1)$. 
\end{enumerate}

\begin{lemma}\label{L.CR.2}  
The pair $(\CRx,\Sigx)$ has the halving property. 
\end{lemma}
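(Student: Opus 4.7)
The plan is to define $\half(\fc)$ by simply inflating the slack parameter $m_\fc$. Specifically, for $\fc=(n,u,\eta,\cF,m,k)\in\CRx$ with $\nor(\fc)>0$, set
\[
\half(\fc) := (n,u,\eta,\cF,m',k), \qquad m' := m + \lfloor \nor(\fc)/2 \rfloor.
\]
Everything other than $m$ is copied from $\fc$. Because $\nor(\fc) = \nor_0(\fc) - m > 0$, we have $m' < \nor_0(\fc)$, so condition \eqref{I.CR.m} for $\CRx$-membership still holds, and the remaining requirements for $\half(\fc) \in \Sigx(\fc)$ (same $n$, same $\eta$, same $k$, same $\cF$, $m \leq m'$) are immediate. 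A direct computation gives
\[
\nor(\half(\fc)) = \nor_0(\fc) - m' = \lceil \nor(\fc)/2 \rceil \geq \tfrac{1}{2}\nor(\fc),
\]
which settles clause (1).

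For clause (2), suppose $\nor(\fc) \geq 2$ and $\fd_1 = (n,u_1,\eta_1,\cF_1,m_1,k_1) \in \Sigx(\half(\fc))$ has $\nor(\fd_1) > 0$. The idea is to undo the artificial inflation of $m$: define
\[
\fc_1 := (n,u_1,\eta_1,\cF_1,m,k_1),
\]
keeping all data from $\fd_1$ except restoring the original slack $m = m_\fc$. Since $m < 3^{-|I_{<n}|}\log_2 k_1$ (indeed $m \leq m' \leq m_1 < \lfloor 3^{-|I_{<n}|}\log_2 k_1 \rfloor$ because $\nor(\fd_1)>0$), we have $\fc_1 \in \CRx$; and the $\Sigx(\fc)$-membership conditions are satisfied by transitivity through $\half(\fc)$, with $m_\fc \leq m_{\fc_1} = m_\fc$ being the only new entry.

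Then $\pos(\fc_1) = \cF_1 = \pos(\fd_1)$ gives the inclusion part for free, and the norm bound follows from a one-line estimate:
\[
\nor(\fc_1) = \lfloor 3^{-|I_{<n}|}\log_2 k_1 \rfloor - m \geq (m_1 + 1) - m \geq m' + 1 - m = \lfloor \nor(\fc)/2 \rfloor + 1 \geq \tfrac{1}{2}\nor(\fc).
\]
There is no real obstacle in this argument; the conceptual point that I would flag is simply that the $m$-coordinate of a creature plays the role of a reservoir: inflating it produces a weaker creature ($\half(\fc)$), and any $\fd_1 \in \Sigx(\half(\fc))$ with positive norm must have $\nor_0(\fd_1) > m'$, so deflating $m$ back to $m_\fc$ while retaining the shrunken $\cF_1$ and $k_1$ of $\fd_1$ recovers at least half the original norm—exactly what the halving property demands.
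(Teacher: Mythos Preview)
Your proof is correct and follows essentially the same approach as the paper: define $\half(\fc)$ by inflating the slack parameter $m$ toward the midpoint of $[m,\nor_0(\fc)]$, and recover $\fc_1$ from $\fd_1$ by deflating $m$ back to $m_\fc$ while keeping all other data of $\fd_1$. The paper's version sets $m_\fd$ to (essentially) $\tfrac12(\nor_0(\fc)+m)$ rather than your $m+\lfloor\nor(\fc)/2\rfloor$, but the two choices differ only by rounding and the verifications are identical.
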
 

\begin{proof} 
$\fc=(n,u,\eta,\cF,m,k)\in \CRx$ such that $\nor (\fc)=2^{-|I_{<n}|}-m>0$. 
Write $r=3^{-|I_{<n}|}$. Since $m<rk$ by \eqref{I.CR.m} we have that 
  $m_\fd=\frac 12 (r k +m)$ satisfies $m'<rk$  and therefore 
$\fd=(n,u,\eta,\cF,m_\fd,k)$ is in $\Sigx(\fc)$. 

Now let us assume $\nor(\fc)\geq 2$ since otherwise there is nothing left to do. 
Assume $\fd_1=(n,u_1,\eta_1,\cF_1,k_1,m_1)\in \Sigx(\fd)$ is such that $\nor(\fe)>0$. 
Note that  $\nor(\fd_1)=r\log_2(k_1)-m_1$, $m_1\geq m_\fd$ and $k_1\leq k_\fd=k$. 

Let $\fc_1=(n,u_1,\eta_1,\cF_1,k_1,m)$. 
Then 
\[
\nor(\fc_1)=\lfloor r \log_2 k_1 \rfloor -m
=\nor(\fd_1)-m+m_1\geq \frac 12 \nor(\fc_1)
\]
as required. 
\end{proof} 

Recall that a forcing notion $\bbP$ is \emph{$\omega^\omega$-bounding} 
if for every name $\dot f$ for an element of $\omega^\omega$ and 
every $p\in \bbP$ there are $q\leq p$ and $g\in \omega^\omega$
such that $q\forces \dot f(n)\leq \check g(n)$ for all $n$.

\begin{prop} \label{P.CR} Forcing notion $\bbQ_\bfx$ is proper, $\omega^\omega$-bounding,  
and both the ordering and the incomparability relation on $\bbQ_\bfx$ are  Borel. 
\end{prop}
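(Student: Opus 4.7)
The proof is essentially an application of the general machinery of \cite{RoSh:470}, and I would organize it around its three assertions.

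For the Borel statements, I would first fix a coding of conditions. Each $p=(f_p,i(p),\bar\fc(p))$ is determined by a natural number $i(p)$, a finite binary string $f_p$, and a sequence of creatures $\fc(p,j)\in \CRx$ for $j\geq i(p)$; since each creature is a finite object, $\bbQ_\bfx$ embeds as a subset of the Polish space $\bigcup_i 2^{I_{<i}}\times \CRx^\omega$ (with $\CRx$ discrete and countable). The only non-elementary clause in the definition of a condition is the norm condition \eqref{p.nor}, which is manifestly $\bPi^0_3$, so $\bbQ_\bfx$ itself is Borel. The ordering $q\leq p$ decomposes as a countable conjunction of decidable conditions on the finite data of each coordinate: compatibility of $f_p$ with $f_q$, membership of the intermediate restrictions in $\pos(\fc(p,j))$, and $\fc(q,j)\in\Sigx(\fc(p,j))$ for $j\geq i(q)$.

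For incomparability, the apparent $\bSigma^1_1$ quantifier hidden in `there exists a common extension $r\leq p,q$' must be eliminated. I would use the combination of $h$-bigness (Lemma~\ref{L.CR.1}) and halving (Lemma~\ref{L.CR.2}) to reduce compatibility to a coordinate-wise criterion: assuming WLOG $i(p)\leq i(q)$, the conditions $p$ and $q$ are compatible iff $f_q\rs I_j\in \pos(\fc(p,j))$ for $j\in[i(p),i(q))$ and, for each $j\geq i(q)$, the creatures $\fc(p,j)$ and $\fc(q,j)$ admit a common $\fd\in \Sigx(\fc(p,j))\cap\Sigx(\fc(q,j))$. The point of halving is that the existence of such a $\fd$ with norm large enough to satisfy \eqref{p.nor} reduces to a Borel condition on the norms of $\fc(p,j)$ and $\fc(q,j)$ alone, so no genuine witness beyond the finite data of each coordinate needs to be produced; compatibility is then a countable Boolean combination of decidable predicates, hence Borel.

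Properness and $\omega^\omega$-boundingness both follow from the main fusion theorems of \cite{RoSh:470} for $\bbQ_f(\CRx,\Sigx)$. Since $(\CRx,\Sigx)$ is an $h$-big creating pair with the halving property and $f$ is $\bfH$-fast, the standard fusion-with-halving argument there produces, for any name $\dot g$ for an element of $\omega^\omega$ and any condition $p$, a stronger condition $q\leq p$ that decides $\dot g(n)$ within a ground-model finite set at each stage, yielding both a ground-model bound on $\dot g$ and a master-condition-style argument for properness over countable elementary submodels. These results in \cite{RoSh:470} are stated for $\bbQ_f^*$, but as already observed after the definition of $\bbQ_\bfx$ the partitioning of $\omega$ by the intervals $J_n$ collapses the distinction $\bbQ_f=\bbQ_f^*$, so they apply directly. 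The main obstacle is the Borel-ness of incomparability; the other two assertions are in essence bookkeeping applications of known theorems from \cite{RoSh:470}.
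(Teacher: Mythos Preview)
Your approach is essentially the paper's: properness and $\omega^\omega$-bounding are deferred to \cite{RoSh:470} via bigness and halving (the paper adds that the pair is finitary and simple, and cites Theorem~2.2.11 and Corollaries~2.2.12, 3.1.2 specifically), and Borelness of $\leq_{\bbQ_\bfx}$ is immediate from the coding.

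The one point where you diverge is the argument for Borelness of $\perp_{\bbQ_\bfx}$. You invoke halving and bigness to justify that compatibility reduces to a coordinate-wise Borel criterion, but neither is needed here. The paper simply defines
\[
g(p,q)(n)=\max\{\nor(\fd): \fd\in \Sigx(\fc(p,n))\cap \Sigx(\fc(q,n))\}
\]
(with $\max\emptyset=0$), observes that $g$ is continuous because each $\CRx$ is finite, and notes that $p$ and $q$ are compatible iff $g(p,q)$ satisfies the largeness requirement \eqref{p.nor} (together with the obviously clopen condition on the stems). Your statement that ``the point of halving is that the existence of such a $\fd$ with norm large enough\dots reduces to a Borel condition on the norms of $\fc(p,j)$ and $\fc(q,j)$ alone'' is a misattribution: halving concerns recovering norm after refinement, not computing the maximal norm of a common refinement. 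The actual reason the criterion is Borel is finitariness, which makes the maximum in the definition of $g$ a finite search. Your conclusion is correct, but the route through halving is a detour; the paper's argument is both shorter and more accurate about which structural property is doing the work.
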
 

\begin{proof} 
In addition to bigness and halving properties of $\bbQ_\bfx$ proved in two lemmas above, 
we note that 
this forcing is finitary (i.e,, each $\CRx$ is finite) and simple (i.e., $\Sigx(S)$ is not defined
for $S\subseteq \CRx$ that contains more than one element). 
By \cite[Corollary~2.2.12 and Corollary~3.1.2]{RoSh:470}, or rather by 
\cite[Theorem~2.2.11]{RoSh:470}, 
it is proper and $\omega^\omega$-bounding. 

It is clear that $\leq_{\bbQ_\bfx}$ is Borel. 
We check the  remaining fact, that the relation $\perp_{\bbQ_\bfx}$ is Borel. 
Function $g\colon (\bbQ_\bfx)^2\to  \omega^ \omega$ defined by 
\[
g(p,q)(n)=\max\{\nor(\fd): \fd\in \Sigx(\fc(p,n))\cap \Sigx(\fc(q,n))\}
\]
(with $\max\emptyset=0$) is continuous. Since $p$ and $q$ are compatible if and only if 
$g(p,q)$ satisfies the largeness requirement \eqref{p.nor}, 
the incompatibility relation is Borel. 
\end{proof} 

\section{Forcing Iteration}
In this long section we analyze properties of forcings used in our proof. 

\subsection{Fusions and continuous reading of names in the iteration}
%\section{Continuous reading of names in the iteration}
\label{S.CRN}
A crucial property of the forcing iteration used in our proof is that it has the 
continuous reading of names (by $\bbR$ we will usually mean $\cP(\omega)$). 

\begin{definition} \label{Def.crn} Consider 
a countable support 
forcing iteration  $(\bbP_\xi, \dot\bbQ_\eta: \xi\leq \kappa, \eta<\kappa)$ such that 
each $\dot\bbQ_\eta$ is a ground-model Suslin forcing notion which 
adds a generic real $\dot g_\xi$. Such an iteration   
has \emph{continuous reading of names} if  for every $\bbP_\kappa$-name $\dot x$ for a new real 
the set of conditions $p$ such that there exists countable $S\subseteq \kappa$, 
compact $F\subseteq \bbR^S$, and continuous $h\colon F\to \bbR$ such that 
\[
p\forces\text{``} \langle \dot g_\xi: \xi\in S\rangle \in F\text{ and } \dot x=h(\langle \dot g_\xi: \xi\in S\rangle)\text{''}
\]
is dense. 
\end{definition}

For iterations of proper forcing notions of the form $P_I$ where $I$ is  a $\bSigma^1_1$ 
on $\bPi^1_1$ 
$\sigma$-ideal of Borel sets (see \cite{Zap:Forcing}) continuous reading of names follows
from posets being $\omega^\omega$-bounding. This is a beautiful result of 
Zapletal (\cite[Theorem 3.10.19 and Theorem 6.3.16]{Zap:Forcing}). 
While many proper forcings adding a real are equivalent to ones 
of the form $P_I$ (see  \cite{Zap:Descriptive}. 
  and \cite{Zap:Forcing}), 
this  
unfortunately does not necessarily apply to 
 creature forcings as used in our proof (see \cite[\S 3]{Kell:Non-elementary}). 

Nevertheless, continuous reading of names in our iteration is   
 a special case of the results in \cite{Sh:630}. For 
 convenience of the reader we shall include a proof of this fact in Proposition~\ref{P.crn} below.   

We shall define a sequence of finer orderings on 
$\bbQ_{\bfx}$ (see \cite[Definition~1.2.11 (5)]{RoSh:470}). 
For $p\in \Qx$ and $j\in \bbN$ let 
\[
\chi(p,j)=\{r\in \Qx: r\leq p, f_r=f_q, \text{  and $\fc(r,i)=\fc(p,i)$ for all $i\leq j$}\}. 
\] 
For $p$ and $q$ in $\bbQ_{\bfx}$  and $n\geq 1$ write
\begin{enumerate}
\item $p\leq_0 q$ if $p\leq q$ and $f_p=f_q$, 
\item $p\leq_n q$ if 
\begin{enumerate}
\item $p\leq_0 q$, and with  $m_i=\min(I_{\min J_i})$  and 
\[
k=\min\{i : \nor(\fc(q,i))>f(n, m_i)\}
\]
we have 
\item $q\in \chi(p,k)$,  and  
\item $\nor(\fc(p,i))\geq f(n,m_i)$ for all $i$ such that $\fc(p,i)\neq \fc(q,i)$. 
\end{enumerate}
\end{enumerate}
We say that $p\in \bbQ_{\bfx}$ 
\emph{essentially decides} a name for an ordinal $\dot m$ if there exists~$j$ such that 
 every $q\in \chi(p,j)$ decides $\dot m$. 

By Theorem 2.2.11, if $\dot m$ is a name for an ordinal and $p\in \bbQ_{\bfx}$ then 
for every $n\in \omega$ there exists $q\leq_n p$ which essentially decides $\dot m$ (of course this 
is behind the proof of  Proposition~\ref{P.CR},   modulo standard fusion arguments). 

Let us now consider $\calR$, the standard poset for adding a random real. 
Conditions are compact subsets of $\cP(\omega)$ of positive Haar measure $\mu$ and 
the ordering is reverse inclusion. For $n\in \omega$ define a finer ordering on $\calR$ 
by 
$q\leq_n p$ if $q\leq p$ and $\mu(q)\geq (1-2^{-n-1})\mu(p)$. 
We say that 
$q\in \calR$ essentially decides $\dot m$ if there exists $j$ such that $q\cap [s]$ decides 
$\dot m$ for every $s\in 2^j$ such that $q\cap [s]\in \calR$. 
The inner regularity of $\mu$ implies that 
 for every name $\dot m$ for an ordinal, every $p\in \calR$ and every $n$ 
there exists $q\leq_n p$ which essentially decides $\dot m$.

In the following proposition we 
assume $\iteration$ 
is a countable support 
iteration  such that 
each $\dot\bbQ_\eta$ is  either some $\bbQ_{\bfx}$ or $\calR$,  
 and that in addition 
the maximal condition of $\bbP_\eta$ decides whether  $\dot \bbQ_\eta$ 
is $\calR$ or $\Qx$, and in the latter case it also decides  $x$, for all~$\eta$.

\begin{prop} \label{P.crn} An iteration $\iteration$ as in the above paragraph
has the continuous reading of names. 
 \end{prop}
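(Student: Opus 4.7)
The plan is to establish continuous reading of names by a standard fusion argument along the iteration, combining the essential-decision properties of $\Qx$ and $\calR$ (made explicit in the paragraphs preceding the proposition) with the countable support structure. First I would introduce compound fine orderings on $\bbP_\kappa$: for $n\in\omega$ and finite $F\subseteq\kappa$, set $q\leq_{n,F} p$ iff $q\leq p$ in $\bbP_\kappa$ and $q\rs\xi\forces q(\xi)\leq_n p(\xi)$ for every $\xi\in F$, where $\leq_n$ refers to the fine ordering on whichever of $\Qx$ or $\calR$ sits at coordinate $\xi$. The fusion lemma then asserts that if $p_0\geq_{0,F_0}p_1\geq_{1,F_1}\cdots$ with $F_n\subseteq F_{n+1}$ and $\bigcup_n F_n\supseteq\bigcup_n\supp(p_n)$, then a greatest lower bound $p_\omega\in\bbP_\kappa$ exists; this is the iteration-level fact whose proof follows the framework of \cite{Sh:630}.

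Next I would carry out the fusion. Given $p\in\bbP_\kappa$ and a name $\dot x$ for a real, fix a countable elementary submodel $M\prec H(\theta)$ containing $p$, $\dot x$, and the iteration, enumerate $S=M\cap\kappa$ as $\{\xi_n:n\in\omega\}$, and set $F_n=\{\xi_0,\ldots,\xi_{n-1}\}$. Inductively build $p_n$ so that $p_{n+1}\leq_{n+1,F_{n+1}}p_n$ and $p_n$ essentially decides $\dot x\rs n$. At each step one works in the appropriate intermediate extension and applies essential decision on the active component, using \cite[Theorem~2.2.11]{RoSh:470} together with Lemma~\ref{L.CR.1} and Lemma~\ref{L.CR.2} for $\Qx$, and inner regularity of $\mu$ for $\calR$, while only refining earlier coordinates in the $\leq_n$ sense on $F_{n+1}$. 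The $\omega^\omega$-bounding property from Proposition~\ref{P.CR} keeps the essential-decision data finitary at each stage, and the fusion lemma yields the limit $p_\omega\leq p$.

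Finally, to extract the continuous reading, observe that the sets $\chi(p_n,j_n)$ and the positive-measure subtrees that arise at $\calR$-coordinates carve out, in the product $\bbR^S$, a compact set $F$ of tuples $\langle g_\xi:\xi\in S\rangle$ compatible with $p_\omega$. Essential decision forces the value of $\dot x(n)$ to depend only on finitely many coordinates of the generic tuple through a locally constant rule, so assembling these rules as $n$ varies produces a continuous $h\colon F\to\bbR$ with $p_\omega\forces\dot x=h(\langle\dot g_\xi:\xi\in S\rangle)$.

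The main obstacle is the fusion lemma itself: one must check that essential decision at stage $\xi$ can be executed inside $V^{\bbP_\xi}$ while remaining $\leq_n$-compatible on prescribed earlier coordinates, and that the resulting tower actually admits a lower bound in the countable support iteration. Both points depend critically on the Suslinness hypothesis on each $\dot\bbQ_\eta$, which guarantees that the relations $\leq_n$, the sets $\chi(\cdot,\cdot)$, and the predicate ``essentially decides'' are absolute between $V$ and every intermediate extension; this is exactly the setting in which the iteration machinery of \cite{Sh:630} applies and delivers the fusion step as a black box.
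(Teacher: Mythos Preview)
Your proposal is correct and follows essentially the same route as the paper: a countable elementary submodel $M$, a fusion sequence along $M\cap\kappa$ using the coordinate-wise $\leq_n$ orderings on $\Qx$ and $\calR$, essential decision of successive digits of $\dot x$, and extraction of the compact $F$ and continuous $h$ from the fusion condition. The only cosmetic difference is that the paper first invokes $\omega^\omega$-bounding to bound $\dot x$ by a ground-model $h$ and then \emph{outright} decides the first $n$ digits at stage $n$ (finitely many possibilities), whereas you phrase this as ``essentially decides $\dot x\rs n$''; these amount to the same thing.
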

 
 \begin{proof}  Since $\bbP_\kappa$ is a 
  countable support iteration 
 of proper, $\omega^\omega$-bounding forcing notions, by 
  \cite{Sh:f} the iteration is proper and $\omega^\omega$-bounding. 
 
 Let $\dot g$ be a name for an element of $\omega^\omega$. By the above and by working below a condition, 
 we may assume  that there exists $h\in \omega^\omega$ such that 
  $\forces_{\bbP} \dot g\leq \check h$. 
  Choose a countable elementary submodel $M$ of $H_{(2^\kappa)^+}$ containing everything
  relevant and let $F_j$, for $j\in \omega$, be an increasing sequence of  finite subsets of $M\cap \kappa$ with union equal to $M\cap \kappa$. 
  By using order $\leq_n$ in $\bbQ_{\bfx}$ and in $\calR$ introduced above, we can construct a
  fusion sequence $p_n$ such that for every $n$ and every $\eta\in F_n$ we have  
  \begin{enumerate}
  \item  $p_n\in M$, 
  \item $p_{n+1}\rs \eta\forces p_{n+1}(\eta)\leq_n p_n$, 
\item  \label{I.digits}  $p_n$ decides the first $n$ digits of $\dot g$ (we can do this 
since $\dot g\leq h$ implies there are only finitely many possibilities). 
\item $p_n\rs \eta$ decides $\fc(p_n(\eta),j)$ for $j\leq n$ if $\dot \bbQ_\eta=\Qx$ for some $x$
or decides $\{s\in 2^n: p_n(\eta)\cap [s]\neq \emptyset\}$ if $\dot \bbQ_\eta=\calR$.  
\end{enumerate}
Then for every $\eta\in M\cap \kappa$ and $n$ large enough 
the condition $p_{n+1}\rs \eta$ for $n\in \bbN$
forces that $p_{n+1}(\eta)\leq_n p_n(\eta)$. Therefore we can define a fusion 
$p$ of sequence $p_n$. Since $p_n\in M$ for all $n$ we have that the 
support of $p$ is included in $S=M\cap \kappa$. Let $F$ be the closed 
subset of $\cP(\bbN)^S$ whose complement is the union of all basic open $U\subseteq \cP(\bbN)^S$
such that $p\forces \dot x\notin U$. 
By \eqref{I.digits} there is a continuous function $h\colon F\to \omega^\omega$ such that 
$p\forces h(\langle \dot g_\xi: \xi\in S\rangle)=\dot x$. 
 \end{proof} 

\subsection{Subiterations and complexity estimates}\label{S.Subiterations}
Assume $\iteration$ is an iteration as in Proposition~\ref{P.crn}. 
Then for every subset $S\subseteq \kappa$ we have a well-defined 
subiteration 
\[
\bbP_S=(\bbP_\xi, \dot\bbQ_\eta: \xi\in S, \eta\in S). 
\]
We shall write $\forces_S$ instead of $\forces_{\bbP_S}$ and $\forces$ instead of $\forces_{\bbP_\kappa}$. 
In some specific 
situations  we have that $p\forces \phi$ is equivalent to $p\forces_S \phi$, where $S$ is the 
support of $p$.

The following result is a key to our proof. In the context of \cite{Zap:Forcing} 
much more can be said, but Zapletal's theory does not apply to the context of creature 
forcings (cf. paragraph after Definition~\ref{Def.crn}).

\begin{lemma} \label{L.PS} Assume $\bbP_\kappa$ is  
a countable support iteration of ground model  
$\omega^\omega$-bounding Suslin forcings. 
 Assume $B$ is a $\bPi^1_1$ set, $p\in \bbP_\kappa$,   $\dot x$ is a name for an element of $\cP(\bbN)$,     
$p \forces\dot x\in B$ and $\dot x$ is a $\bbP_{\supp(p)}$-name. 
Then  $p\forces_{\supp(p)} \dot x\in B$. 
\end{lemma}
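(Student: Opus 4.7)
The plan is to argue by contradiction, relying on absoluteness of $\bPi^1_1$ statements between transitive models. Suppose toward contradiction that $p \not\forces_{\supp(p)} \dot x \in B$. Then there exists $q \leq p$ in $\bbP_{\supp(p)}$ with $q \forces_{\supp(p)} \dot x \notin B$. Since $\supp(q) \subseteq \supp(p)$, the same $q$ regarded as a condition of $\bbP_\kappa$ still satisfies $q \leq p$ in $\bbP_\kappa$.

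Next, I would pick $G$ that is $\bbP_\kappa$-generic over $V$ with $q \in G$. By the standard theory of subiterations for countable support iterations of ground-model Suslin forcings whose identity is decided by the maximal condition (the setup of Proposition~\ref{P.crn}), the poset $\bbP_{\supp(p)}$ is a regular subforcing of $\bbP_\kappa$, so $G_0 := G \cap \bbP_{\supp(p)}$ is $\bbP_{\supp(p)}$-generic over $V$ and contains $q$. Because $\dot x$ is a $\bbP_{\supp(p)}$-name we have $\dot x[G] = \dot x[G_0]$, so in $V[G_0]$ the real $\dot x[G_0]$ fails to lie in $B$. Now $V[G_0] \subseteq V[G]$ are transitive models of $\ZFC$ with the same ordinals, and any real parameter coding the $\bPi^1_1$ set $B$ lies in $V \subseteq V[G_0]$; Mostowski absoluteness for $\bPi^1_1$ statements therefore yields $V[G] \models \dot x[G] \notin B$. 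Together with $q \leq p$ in $\bbP_\kappa$ this contradicts $p \forces \dot x \in B$.

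The main obstacle I expect is the justification that $\bbP_{\supp(p)}$ is a regular subforcing of $\bbP_\kappa$. For arbitrary $S \subseteq \kappa$ in a general countable support iteration this need not be automatic, but in the present setting each iterand $\dot \bbQ_\eta$ is (decided by the maximal condition to be) a specific ground-model Suslin forcing whose underlying set, ordering, and incompatibility relation are Borel in absolute parameters. Standard arguments of the same flavor as those used for continuous reading of names in Proposition~\ref{P.crn} (cf.\ \cite{Sh:630}) then guarantee $\bbP_S \reg \bbP_\kappa$ for every $S \subseteq \kappa$ and that the intersection of a $\bbP_\kappa$-generic filter with $\bbP_S$ is $\bbP_S$-generic, which furnishes the embedding used above.
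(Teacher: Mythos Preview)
Your argument hinges entirely on the claim that $\bbP_{\supp(p)}$ is a regular subforcing of $\bbP_\kappa$, and this is the step that is not justified. You correctly flag it as the main obstacle but then dispose of it with an appeal to ``standard arguments of the same flavor as those used for continuous reading of names.'' Continuous reading of names is a statement about how names for reals are determined by the generic sequence; it is not the same as, and does not obviously imply, that $\bbP_S \reg \bbP_\kappa$ for arbitrary $S$. Even when every iterand is a fixed ground-model Suslin forcing, a condition $r \in \bbP_\kappa$ has coordinates $r(\eta)$ that are $\bbP_\eta$-names rather than $\bbP_{S\cap\eta}$-names, so there is no evident reduction map, and it is unclear that maximal antichains in $\bbP_S$ remain maximal in $\bbP_\kappa$. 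The paper itself only asserts that $\bbP_S$ is ``well-defined'' and never claims regularity; the purpose of Lemma~\ref{L.PS} is precisely to recover, for $\bPi^1_1$ sets only, the one consequence of regularity that the later arguments need. Note also that your proof makes no use whatsoever of the $\omega^\omega$-bounding hypothesis --- a strong sign that something essential is being skipped.

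The paper's route avoids regularity altogether. After finding $q \leq p$ in $\bbP_S$ with $q \forces_S \dot x \notin B$, it takes a $\bbP_S$-name $\dot y$ for the $\bSigma^1_1$ witness (a branch through a tree $T$ projecting to the complement of $B$) and uses the $\omega^\omega$-bounding property of $\bbP_S$ to dominate $\dot y$ by a ground-model $h \in \omega^\omega$. It then works with a countable elementary $M \prec H_\theta$ and an $M$-generic filter $G \subseteq \bbP_\kappa \cap M$ living in $V$: with $x = \intG(\dot x)$, the section tree $T_x$ cut down to $\{s : s(i) \leq h(i)\text{ for all }i\}$ is finitely branching and infinite, so K\"onig's Lemma in $V$ yields a branch and hence $x \notin B$ in $V$, contradicting $p \forces \dot x \in B$. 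The ground-model bound $h$ is exactly what lets one recover the witness by compactness in $V$, without ever needing a $\bbP_S$-generic over $V$.
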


\begin{proof} Let $S=\supp(p)$. Assume the contrary and find $q\leq p$ such that $q\forces_S \dot x\notin B$. Let $T$ be a tree whose projection is the complement of $B$ and let $\dot y$ be a name 
such that $q$ forces (in $\bbP_S$) that $(\dot y, \dot x)$ is a branch through $T$. 
Since $\bbP_S$ is an iteration of $\omega^\omega$-bounding forcings it is 
$\omega^\omega$-bounding (\cite{Sh:f}) we can assume (by extending $q$ if necessary) that 
$q\forces_S \dot y\leq \check h$ for $h\in \omega^\omega$.

Now choose a countable $M\prec H_\theta$ for a large enough $\theta$ so that $M$ contains
$\bbP_\kappa$, $q$, $\dot x$, $T$, $h$ and everything relevant. 
Let $G\subseteq \bbP_\kappa\cap M$ be an $M$-generic filter  containing $q$. 
Let 
$x=\intG(\dot x)$. The tree $T_x=\{s: (s,x\rs n)\in T$ for some $n\}$ is finitely 
branching (being included in $\{s: s(i)\leq h(i)$ for all $i<|s|\}$) 
and infinite.  
It therefore has an infinite branch by K\"onig's Lemma. This implies that $x\notin B$, contradicting the fact that 
$p\forces \dot x\in B$. 
\end{proof}

 Recall that a forcing notion is \emph{Suslin proper} if its underlying set is an analytic set of reals
 and both $\leq$ and $\perp$ are analytic relations.  
 The following lemma is well-known. 
 
 \begin{lemma} \label{L.Delta-1-2} 
Assume $\bbP$ is Suslin proper,  
$\dot x$ is a $\bbP$-name for a real, and  
$A\subseteq \bbR^2$ is Borel. 
Then for a dense set of conditions $p\in \bbP$ the set 
\[
\{a: p\forces (\check a,\dot x)\in A\}
\]
is $\bDelta^1_2$. 
\end{lemma}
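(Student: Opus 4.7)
The plan is to first refine $\dot x$ to a ``Borel-read'' name below a dense set of conditions, and then do a complexity count for the forcing relation against the resulting Borel set.

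\smallskip

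\emph{Step 1 (reduction to continuous reading).} Since $\bbP$ is Suslin proper it is, in particular, proper, so a standard fusion/properness argument produces a dense set of $p\in\bbP$ below which $\dot x$ is computed by a Borel function of the generic filter. Concretely, below a given condition one may successively replace the maximal antichains deciding $\dot x(n)$ by countable predense sets, obtaining a countable $S_p\subseteq\bbP$ below $p$ and a Borel $h_p\colon \bbR\to\bbR$ such that $p\forces \dot x=h_p(\dot g_p)$, where $\dot g_p$ is the natural real coding which elements of $S_p$ meet the generic filter. The same conclusion could equivalently be obtained by appealing to a local version of Proposition~\ref{P.crn} for the single Suslin proper forcing $\bbP$.

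\smallskip

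\emph{Step 2 (rewriting the target set).} Fix such a $p$ together with $h_p$. For any $a\subseteq\omega$ let $B_a=h_p^{-1}(\{y:(a,y)\in A\})$, a Borel subset of $\bbR$ whose Borel code depends in a Borel way on~$a$. Then
\[
\{a : p\forces(\check a,\dot x)\in A\}=\{a: p\forces \dot g_p\in \check B_a\}.
\]
So it suffices to bound the complexity in $a$ of the relation ``$p\forces \dot g_p\in \check B_a$''.

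\smallskip

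\emph{Step 3 (complexity count).} Here we use that $\bbP$ is \emph{Suslin}: the underlying set, the order $\leq$, and the incompatibility relation $\perp$ are analytic. The statement ``$p\forces \dot g_p\in \check B_a$'' can be written in two dual ways. First, as the $\bPi^1_2$ assertion ``for every $V$-generic $G$ with $p\in G$, $\intG(\dot g_p)\in B_a$'', unpacked via the Suslin presentation of $\bbP$ so that quantification over generics becomes quantification over reals coding a sufficiently long approximation to $G$. Second, as the negation of ``there exists $q\leq p$ with $q\forces \dot g_p\notin \check B_a$''; the inner forcing statement is, by the same argument, $\bPi^1_2$, and analytic quantification over $q\leq p$ yields a $\bSigma^1_2$ expression for the negation. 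Thus the set is simultaneously $\bPi^1_2$ and $\bSigma^1_2$, hence $\bDelta^1_2$, with parameters $p$, $h_p$, and the Borel code of $A$.

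\smallskip

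The main obstacle is Step 1: arranging a Borel decoding below a dense set of $p$ for an arbitrary $\bbP$-name $\dot x$, without any bounding hypothesis on $\bbP$, relies only on properness plus the countability of the collection of antichains coding $\dot x$. Once this is in place, Step 3 is a routine bookkeeping that exploits the Suslin hypothesis precisely to keep the quantifiers over conditions at the analytic level.
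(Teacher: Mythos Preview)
Your Step 3 does not actually produce a $\bSigma^1_2$ description, so the $\bDelta^1_2$ conclusion is not established. The first formulation, ``for every $V$-generic $G$ with $p\in G$ \dots'', is not a projective statement at all: $V$-generic filters do not exist in $V$, and the phrase ``quantification over reals coding a sufficiently long approximation to $G$'' does not turn it into one without further work. The second formulation says $\{a: p\forces(\check a,\dot x)\in A\}$ is the complement of $\{a:(\exists q\leq p)\, q\forces(\check a,\dot x)\notin A\}$; granting that the inner forcing relation is $\bPi^1_2$, the existential quantifier over $q$ gives a $\bSigma^1_2$ set, and taking the complement brings you back to $\bPi^1_2$. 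So both of your computations yield $\bPi^1_2$, and you have not shown $\bSigma^1_2$.

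The paper's argument avoids this by quantifying over \emph{countable transitive models} of $\ZFC$ rather than over conditions or generics directly. After passing (by properness) to a $p$ below which the antichains in $\dot x$ are countable, one has
\[
p\forces(\check a,\dot x)\in A\iff (\exists M)\,(\forall G\ M\text{-generic},\,p\in G)\ (a,\intG(\dot x))\in A
\]
and equally with $(\forall M)$ in place of $(\exists M)$, where $M$ ranges over countable well-founded models of $\ZFC$ containing the relevant parameters. Since well-foundedness is $\bPi^1_1$ and $M$-genericity is arithmetic in the code for $M$, the ``$\exists M$'' version is $\bSigma^1_2$ and the ``$\forall M$'' version is $\bPi^1_2$. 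The equivalence of the two uses that for Suslin forcings the relation $p\forces(\check a,\dot x)\in A$ is absolute between $V$ and such $M$. Your Step~1 reduction to a Borel-read name is harmless but unnecessary once you argue this way; the real missing ingredient is the use of countable models to get the $\bSigma^1_2$ half.
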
 

\begin{proof} Since $\bbP$ is proper the set of all    $p\in \bbP$ such that all 
antichains in  $\dot x$ are countable below $p$ is dense.  
For $a\subseteq \omega$ we now
 have that $ p\forces (\check a,\dot x)\in A$ 
 if there exists a countable well-founded 
model $M$ of $\ZFC$ containing everything relevant such that for every $M$-generic 
$G\subseteq M \cap \bbP$ with $p\in G$ we have that $F(a, \intG(\dot x))\in A$. 
This is a $\bSigma^1_2$ 
statement with $A$ as a parameter. 

Alternatively, $p\forces (\check a,\dot x)\in A$ if for every countable 
well-founded model $M$ of $\ZFC$
and every $M$-generic $G\subseteq M \cap \bbP$ with $p\in G$ 
we have that $F(a, \intG(\dot x))\in A$.
This is a $\bPi^1_2$-statement with $A$ as a parameter. 
\end{proof}

\begin{lemma} \label{L0} Assume $\iteration$ is as in Proposition~\ref{P.crn}. 
Assume $\dot x$ is a $\bbP$-name for a real,  
$A\subseteq \bbR$ is Borel and $g\colon\bbR^2\to \bbR$ is a Borel function. 
If $p\in \bbP$ is such that the name $\dot x$ is continuously read below $p$ then the set  
\[
\{a: p\forces g(\check a,\dot x)\in A\}
\]
is $\bDelta^1_2$. 
\end{lemma}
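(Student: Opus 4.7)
The plan is to use the continuous reading of $\dot x$ to rewrite the statement as one involving only countably many coordinate generics, and then to invoke the countable-model characterization of the forcing relation (as in Lemma~\ref{L.Delta-1-2}) in the resulting countable subiteration.

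First, by continuous reading of $\dot x$ below $p$, fix a countable $S\subseteq\kappa$, a compact $F\subseteq\bbR^S$, and a continuous $h\colon F\to\bbR$ with
\[
p \forces \vec{\dot g}_S \in F \text{ and } \dot x = h(\vec{\dot g}_S),
\]
where $\vec{\dot g}_S := \langle\dot g_\xi:\xi\in S\rangle$. Set $B := g^{-1}(A)\subseteq\bbR^2$ (Borel) and, for each $a\in\bbR$,
\[
B_a := \{\vec y\in F: (a,h(\vec y))\in B\},
\]
a Borel subset of $F$, uniformly Borel in $a$. Then $p\forces g(\check a,\dot x)\in A$ if and only if $p\forces \vec{\dot g}_S\in B_a$.

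Next, let $T := S\cup\supp(p)$, a countable subset of $\kappa$. Since $B_a$ and $F\setminus B_a$ are Borel (hence $\bPi^1_1$) and $\vec{\dot g}_S$ is a $\bbP_T$-name, two applications of (the evident variant of) Lemma~\ref{L.PS} with $T$ in place of $\supp(p)$---once to $B_a$ and once to its complement---yield
\[
p\forces_\kappa \vec{\dot g}_S\in B_a \text{ if and only if } p\forces_T \vec{\dot g}_S\in B_a.
\]

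The subiteration $\bbP_T$ is a countable-support iteration of length at most $\omega$ of Suslin proper forcings defined by ground-model parameters; it is proper, and its underlying set together with the orderings and the incompatibility relation admit analytic codings (by iterating the argument of Proposition~\ref{P.CR}). Consequently the proof of Lemma~\ref{L.Delta-1-2} carries over to $\bbP_T$: the statement $p\forces_T \vec{\dot g}_S\in B_a$ is equivalent to the $\bSigma^1_2$ assertion that there exists a countable wellfounded model $N\models\ZFC$ containing $p, \bbP_T, h, F$, and $B$ such that every $(N,\bbP_T\cap N)$-generic $G\ni p$ satisfies $(a, h(\intG(\vec{\dot g}_S)))\in B$, and also to the parallel $\bPi^1_2$ assertion in which the existential quantifier over $N$ is replaced by a universal one. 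Both characterizations are uniform in $a$, so $\{a:p\forces g(\check a,\dot x)\in A\}$ is $\bDelta^1_2$.

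The point requiring most care is that $\bbP_T$ is an iteration rather than a single-step Suslin proper forcing; properness follows from standard countable-support iteration theory for Suslin proper forcings, the analytic coding of $\bbP_T$ exists because $T$ is countable and each iterand is specified by a ground-model parameter, and the countable-model characterization of the forcing relation therefore carries over with only routine modifications.
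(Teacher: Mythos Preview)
Your proof is correct and follows essentially the same three-step approach as the paper: use continuous reading of $\dot x$, invoke Lemma~\ref{L.PS} to reduce to the countable subiteration, and then apply the countable-model characterization of forcing from Lemma~\ref{L.Delta-1-2}. The paper's proof is terser (it simply takes $S=\supp(p)$ and asserts the equivalence $p\forces\cdots\iff p\forces_S\cdots$ without separating the two directions), whereas you are more explicit in taking $T=S\cup\supp(p)$ and in applying Lemma~\ref{L.PS} once to $B_a$ and once to its complement; these are minor expository differences, not a different strategy.
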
 

\begin{proof} 
By Proposition~\ref{P.crn}, with  
$S=\supp(p)$ we have 
a compact 
$F\subseteq \cP(\bbN)^S$ and a continuous $h\colon F\to \cP(\omega)$ such that
$p\forces h(\langle \dot g_\xi: \xi\in S\rangle)=\dot x$. 

Lemma~\ref{L.PS} implies that $p\forces g(\check a,\dot x)\in A$
if and only if $p\forces_S g(\check a,\dot x)\in A$. 
Since $S$ is countable, by Lemma~\ref{L.Delta-1-2} the latter set is $\bDelta^1_2$. 
\end{proof}

%\section{Counting quantifiers} 
%\label{S.LBP}

%Assumptions  of the following lemma are well-known to hold for a wider class
%of nep forcings (see \cite{Sh:630}). 

\subsection{Reflection}\label{S.Ref}
Throughout this section we assume
 $(\bbP_\xi, \dot\bbQ_\eta: \xi\leq \kappa, \eta<\kappa)$ is a forcing iteration of proper forcings
 of cardinality $<\kappa$ in some model $M$ of a large enough fragment of ZFC. 
 We also assume 
 $G_\kappa\subseteq \bbP_\kappa$ is an $M$-generic filter and 
 let   $G\rs\xi$ denote $G\cap \bbP_\xi$. 
If $\dot A$ is a $\bbP_\kappa$-name for a set of reals we can consider it as a collection of 
nice names for reals. Furthermore, since $\bbP_\kappa$ is proper then we can identify $\dot A$ 
with a collection of pairs $(p,\dot x)$ where $p\in \bbP_\kappa$ and $\dot x$ is a name that involves only countable antichains below  $p$. The intention is that $p$ forces $\dot x$ is in $A$. 
With this convention we let  $\dot A\rs \xi$ denote the subcollection of $\dot A$ consisting only of those 
pairs $(p, \dot x)$ such that $p\in \bbP_\xi$ and $\dot x$ is a $\bbP_\xi$ name, 

The following `key triviality'  will be used repeatedly in proof of the main theorem. 
It ought to be  well-known but it does not seem to appear explicitly in the literature. 

\begin{prop}\label{P.sigma-1-2}
Assume $\kappa>\fc$ is a regular cardinal and  
\[
(\bbP_\xi, \dot\bbQ_\eta: \xi\leq \kappa, \eta<\kappa)
\]
  is a countable support iteration of proper forcings 
of cardinality $<\kappa$. 
Assume $\dot A$ is a $\bbP_\kappa$ name for a set of reals. Then the set of ordinals $\xi<\kappa$ such that
\[
(H(\aleph_1), \intGx\xi(\dot A\rs \xi))^{V[G\rs \xi]}\prec 
(H(\aleph_1), \intG(\dot A))^{V[G]}
\]
includes a club relative to $\{\xi<\kappa: \cf(\xi)\geq \omega_1\}$. 
\end{prop}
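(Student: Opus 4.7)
The plan is a L\"owenheim--Skolem-style reflection argument that uses properness and countable support of the iteration to localize witnesses of existential statements to bounded stages below $\kappa$.

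First I would record the standard localization fact: whenever $\cf(\xi)\geq \omega_1$, every $\bbP_\xi$-name for a real is, after natural identification, a $\bbP_\eta$-name for some $\eta<\xi$. This follows because properness makes each of the countably many maximal antichains needed to read off the name countable, each condition in those antichains has countable support by the choice of iteration, and the union of these supports is bounded in $\xi$ by $\cf(\xi)\geq \omega_1$. Consequently $H(\aleph_1)^{V[G\rs\xi]}=\bigcup_{\eta<\xi}H(\aleph_1)^{V[G\rs\eta]}$, and the paper's pair-convention for $\dot A \rs \xi$ gives $\intGx{\xi}(\dot A\rs\xi)=\intG(\dot A)\cap V[G\rs\xi]$, so the atomic predicate ``$y\in A$'' is absolute between the two structures for parameters that already lie in $V[G\rs\xi]$.

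The heart of the argument is a closing-off function $f\colon\kappa\to\kappa$. For each $\xi<\kappa$ and each triple $(\phi,\vec{\dot a},p)$ consisting of a formula $\phi$ in the language of $(H(\aleph_1),\dot A)$, a finite tuple $\vec{\dot a}$ of $\bbP_\xi$-names for reals, and a condition $p\in\bbP_\xi$, pick (whenever it exists) a $\bbP_\kappa$-witness $(\dot b,q)$ with $q\leq p$ and $q\forces_{\bbP_\kappa}\phi(\dot b,\vec{\dot a})$; let $f(\xi)$ be the supremum of the minimal stages below $\kappa$ at which the chosen pairs $(\dot b,q)$ live, existence of which is the localization fact again, this time applied to $\bbP_\kappa$-names. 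The number of relevant triples is bounded by $|\bbP_\xi|^{\aleph_0}<\kappa$ (using $|\bbP_\xi|<\kappa$ and $\kappa>\fc$ regular), so $f(\xi)<\kappa$ by regularity of $\kappa$. The set $C=\{\xi<\kappa:f(\eta)<\xi\text{ for all }\eta<\xi\}$ is then a club in $\kappa$, and $C\cap\{\xi:\cf(\xi)\geq\omega_1\}$ is club relative to $\{\xi:\cf(\xi)\geq\omega_1\}$.

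For $\xi$ in this club I would verify the Tarski--Vaught condition by induction on the complexity of $\phi(y,\vec a)$ with $\vec a\in H(\aleph_1)^{V[G\rs\xi]}$: atomic cases, including ``$y\in A$'', are handled by the first paragraph; propositional connectives are immediate; and the existential case uses the closing-off---each $a_i$ is realized by a $\bbP_{\eta_i}$-name with $\eta_i<\xi$, so the selected witness has been placed at a stage bounded by $f(\sup_i\eta_i)<\xi$ and therefore already lies in $V[G\rs\xi]$. The main obstacle I expect is the cardinality bookkeeping---keeping $|\bbP_\xi|^{\aleph_0}$ genuinely below $\kappa$ from only the hypotheses $|\bbP_\xi|<\kappa$ and $\kappa>\fc$, which is tight but works in the intended $\kappa=\aleph_2$, ground-model-CH setup---together with making sure the pair-presentation of $\dot A$ is rich enough that $\intGx{\xi}(\dot A\rs\xi)=\intG(\dot A)\cap V[G\rs\xi]$ holds on the nose and not just up to equivalence of names.
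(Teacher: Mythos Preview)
Your approach is essentially the same as the paper's, just spelled out in detail. The paper's proof observes that no reals are added at stages of uncountable cofinality (your localization fact), notes that $H(\aleph_1)^{V[G\rs\xi]}$ is then the direct limit of the earlier $H(\aleph_1)^{V[G\rs\eta]}$, and concludes by invoking the ``basic fact from model theory'' that club many substructures of a structure of size $\kappa$ are elementary. Your explicit closing-off function $f$ and Tarski--Vaught verification are exactly the standard proof of that basic fact, so there is no genuine difference in strategy.

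Two remarks. First, your caution about $\intGx{\xi}(\dot A\rs\xi)=\intG(\dot A)\cap V[G\rs\xi]$ is well placed: the $\supseteq$ direction is not automatic from the pair-presentation alone and should itself be folded into the closing-off (for each nice $\bbP_\eta$-name $\dot r$ and each $p$ forcing $\dot r\in\dot A$, record a stage at which a witnessing pair lives). The paper glosses over this, but it is implicit in the appeal to the model-theoretic fact. Second, your worry about $|\bbP_\xi|^{\aleph_0}<\kappa$ is legitimate for the proposition as stated in full generality; in the paper's intended application ($\kappa=\aleph_2$, ground-model CH, $|\bbP_\xi|\leq\aleph_1$) one has $\aleph_1^{\aleph_0}=\aleph_1<\aleph_2$ and there is no difficulty.
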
  

 \begin{proof} Since each $\bbP_\xi$ is proper (\cite{Sh:f}), 
 no reals are added at stages of uncountable cofinality. 
 Therefore if $\cf(\eta)$ is uncountable then 
 $H(\aleph_1)^{V[G\rs\eta]}$ is the direct limit of
 $H(\aleph_1)^{V[G\rs\xi]}$ for $\xi<\eta$. 
 The assertion is now reduced to a basic fact from model theory: 
 club many substructures of $(H(\aleph_1), \intG(\dot A))^{V[G]}$ of cardinality $<\kappa$
 are elementary submodels. 
\end{proof}

%In the following corollary we  assume $\kappa>\fc$ is a regular cardinal 
%and 
%\[
%(\bbP_\xi, \dot\bbQ_\eta: \xi\leq \kappa, \eta<\kappa)
%\]
 % is an iteration of proper forcings 
%of cardinality $<\kappa$, $G_\kappa\subseteq\bbP_\kappa$ is a $V$-generic filter and $G\rs\xi$ is
%its intersection with $\bbP_\xi$. 
 
 \begin{definition} Using notation as in the beginning of \S\ref{S.Ref}
 we say that  a formula 
  $\phi(x,Y)$ (with parameters $x\in \bbR$ and $Y\subseteq \bbR$) \emph{reflects} 
(with respect to $\bbP_\kappa$) 
  if for every name $\dot a$ for a real and every name $\dot B$ for a set of reals the following
  are equivalent. 
  \begin{enumerate}
  \item 
  $V[G]\models \phi(\dot a, \dot B)$, and
  \item There is a club $\bfC\subseteq \kappa$ such that 
  for all $\xi\in \bfC$ with $\cf(\xi)\geq \omega_1$ we have 
  $V[G\rs \xi]\models \phi(\dot a, \dot B\rs \xi)$.  
\end{enumerate}
\end{definition}
 
 \begin{coro}\label{C.sigma-1-2} 
 Let $(\bbP_\xi, \dot\bbQ_\eta: \xi\leq \kappa, \eta<\kappa)$ be a countable support iteration 
 of proper forcings of cardinality $<\kappa$. 
 Assume $\dot\cI$ and $\dot\cJ$ are $\bbP_\kappa$-names  
for Borel ideals on $\omega$ and  $\dot\Phi$ is a $\bbP_\kappa$-name for an isomorphism between their 
quotients. 
\begin{enumerate}
\item for every name $\dot a$ for a real the statement $\dot a\in \Triv^1_{\dot \Phi}$ reflects. 
 \item For $0\leq j\leq 2$ the statement ``$\Triv^j_{\dot\Phi}$ is meager'' reflects. 
\item For every  $\bbP_\kappa$-name $\dot I$ for a partition of $\omega$ into finite sets
the statement $\dot \cI\subseteq \calH(\dot I)$ reflects. 
\end{enumerate}
\end{coro}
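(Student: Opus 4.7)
The plan is to reduce each of the three assertions to a first-order statement in the structure $(H(\aleph_1),\dot A)^{V[G]}$ for a judiciously chosen name $\dot A$ for a set of reals, and then invoke Proposition~\ref{P.sigma-1-2} directly. That proposition provides a club of $\xi<\kappa$ of uncountable cofinality along which the substructures $(H(\aleph_1),\dot A\rs\xi)^{V[G\rs\xi]}$ are elementary in $(H(\aleph_1),\dot A)^{V[G]}$; since proper forcings add no reals at stages of uncountable cofinality, any first-order property with real parameters will then reflect in the sense of the definition.

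Concretely, I would take $\dot A$ to be a name for the set of reals obtained (via a fixed Borel bijection $\cP(\omega)^2\cong\cP(\omega)$) by bundling together Borel codes for $\dot\cI$ and $\dot\cJ$ with the graph $\{(b,c):(b,c)\in\dot\Phi\}$. For (3), the condition $\dot\cI\subseteq\calH(\dot I)$ is, with $\dot I$ as a real parameter, simply $\forall a\,(a\in\cI\to \exists n_0\,\forall n\geq n_0\,I_n\not\subseteq a)$, which is first-order over $(H(\aleph_1),\dot A)$. For (1), ``$\dot a\in\Triv^1_{\dot\Phi}$'' unpacks to the existence of a real $e$ coding a continuous $F_e\colon\cP(\omega)\to\cP(\omega)$ such that $F_e(b)\triangle c\in\cJ$ whenever $b\subseteq a$ and $(b,c)\in A$; continuous functions between Polish spaces are countable objects, so the quantification is internal to $H(\aleph_1)$. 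For (2), the Jalali--Naini--Talagrand theorem rewrites ``$\Triv^j_{\dot\Phi}$ is meager'' as the existence of a partition $\vec I$ of $\omega$ into finite intervals with $\Triv^j_{\dot\Phi}\subseteq \calH(\vec I)$, reducing the problem to expressing each predicate ``$a\in\Triv^j_{\dot\Phi}$'' for $j=0,1,2$ as a first-order formula over $(H(\aleph_1),\dot A)$.

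The main obstacle is handling the predicate for $j=2$. For $j=0$ one quantifies over real codes of Boolean homomorphisms $\cP(\omega)\to\cP(\omega)$ agreeing with $\dot A$ modulo $\dot\cJ$ below $a$, and $j=1$ reuses the formula from (1). For $j=2$ I would quantify over pairs of reals coding a $\bSigma^1_2$ and a $\bPi^1_2$ set that coincide and whose common value is a subset of the graph of $\Phi\rs a$ projecting onto $\cP(a)$; this is first-order over $(H(\aleph_1),\in)$ because $\bSigma^1_2$ and $\bPi^1_2$ truth about a real is expressible by a $\Sigma_1$ (respectively $\Pi_1$) statement over $H(\aleph_1)$ with the code and the real as parameters, and Shoenfield absoluteness then ensures that a pair of codes describing a $\bDelta^1_2$ set in the smaller model continues to describe one in the larger model, coherently with the elementary embedding. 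Once all three predicates are realized as first-order formulas over $(H(\aleph_1),\dot A)$, each of (1), (2), and (3) becomes a first-order property in the same language, and reflection follows immediately from Proposition~\ref{P.sigma-1-2}.
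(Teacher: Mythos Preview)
Your approach is correct and coincides with the paper's one-line proof, which simply notes that the pertinent statements are projective with the interpretation of $\dot\Phi$ as a parameter and invokes Proposition~\ref{P.sigma-1-2}. Two small remarks: the appeal to Shoenfield absoluteness is redundant once everything is expressed first-order over $(H(\aleph_1),A)$, since the elementarity supplied by Proposition~\ref{P.sigma-1-2} already transfers such statements; and for $j=0$ one cannot literally quantify over real codes for arbitrary Boolean homomorphisms $\cP(a)\to\cP(\omega)$ (such homomorphisms correspond to families of ultrafilters on $a$ and need not be real-coded), though the paper's terse argument elides the same point and only the cases $j=1,2$ are used downstream.
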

\begin{proof} Since the pertinent statements are projective with the interpretation of 
$\dot\Phi$ as a parameter, each of the  assertions is a consequence of Proposition~\ref{P.sigma-1-2}.  
\end{proof} 

\subsection{Random reals} 
 
 We identify  $\cP(\omega)$ with $2^\omega$ and with $(\bbZ/2\bbZ)^\omega$ 
 and equip it with  the corresponding Haar measure.  
 The following lemma will be instrumental in the proof of one of our key lemmas, Lemma~\ref{L3}. 
 
 \begin{lemma}\label{L.x1} 
Assume $\cJ$ is a  Borel ideal and
$f$ and $g$ are continuous functions such that 
each one of them is a representation of a homomorphism 
from $\cP(\omega)$ into $\cP(\omega)/\cJ$. If the set 
\[
\Delta_{f,g,\cJ}=\{c\subseteq \omega: f(c)\neq^\cJ g(c)\}
\]
is null 
then it is empty. 
\end{lemma}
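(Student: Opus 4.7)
The plan is to show that the complement
\[
N := \cP(\omega) \setminus \Delta_{f,g,\cJ} = \{c \subseteq \omega : f(c) =^\cJ g(c)\}
\]
is a Borel subgroup of the compact Polish group $(\cP(\omega), \triangle)$ of full Haar measure, and then to invoke the classical fact that any such subgroup must equal the whole group. This immediately gives $\Delta_{f,g,\cJ} = \emptyset$.

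For the subgroup step, set $A(c) := f(c) \triangle g(c)$, so that $N = \{c : A(c) \in \cJ\}$. Because $f$ represents a Boolean algebra homomorphism $\Phi_f \colon \cP(\omega) \to \cP(\omega)/\cJ$, for every $c, d \subseteq \omega$ we have $f(c \triangle d) =^\cJ f(c) \triangle f(d)$, and likewise for $g$. Combining these yields the cocycle identity
\[
A(c \triangle d) =^\cJ A(c) \triangle A(d).
\]
Since $\cJ$ is an ideal, $c, d \in N$ forces $c \triangle d \in N$. The identity element $\emptyset$ lies in $N$ because $\Phi_f(\emptyset) = \Phi_g(\emptyset) = [\emptyset]_\cJ$ gives $f(\emptyset), g(\emptyset) \in \cJ$, and every element of $(\cP(\omega), \triangle)$ is its own inverse. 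Thus $N$ is a subgroup. It is Borel because $f$ and $g$ are continuous and $\cJ$ is Borel, and it has full Haar measure because $\Delta_{f,g,\cJ}$ is null by hypothesis.

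To conclude, apply Steinhaus' theorem inside the compact Polish group $(\cP(\omega), \triangle)$: any Haar-measurable subgroup of positive measure contains an open neighborhood of the identity, hence is open, hence clopen, hence of finite index equal to $1/\mu(N)$. Since $\mu(N) = 1$, this index is $1$, so $N = \cP(\omega)$ and $\Delta_{f,g,\cJ} = \emptyset$.

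The only substantive check is the cocycle identity for $A$, which is immediate from the definition of a representation of a homomorphism; everything else is a standard appeal to measure-theoretic rigidity in a Polish group. I therefore do not anticipate any serious obstacle---the content of the lemma is precisely the observation that the disagreement locus of two representations is, up to $\cJ$, the complement of a subgroup under symmetric difference, after which a null hypothesis upgrades to emptiness.
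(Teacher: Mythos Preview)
Your proof is correct and rests on the same underlying idea as the paper's: the complement $N=\{c:f(c)=^{\cJ}g(c)\}$ is closed under symmetric difference, so a null disagreement set must be empty.  The paper, however, avoids the explicit subgroup verification and the appeal to Steinhaus by a direct pigeonhole: pick a compact $K\subseteq N$ of measure $>1/2$; for any $c$ the translates $K$ and $K\triangle c$ intersect, giving $b\in N$ with $c\triangle b\in N$, and then the same cocycle computation you wrote yields $c\in N$.  So the two arguments differ only in packaging---yours invokes a named theorem where the paper reproves the needed instance in one line---and neither has an advantage in generality here.
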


 \begin{proof} By the inner regularity of Haar measure we can find a compact set $K$ 
 disjoint from $\Delta_{f,g,\cJ}$ of measure $>1/2$. 
 Fix any $c\subseteq \omega$. The sets $K$ and $K\underline\Delta c=\{b\Delta c: b\in K\}$
 both have measure $>1/2$ and therefore we can find $b\in K$ such that $b\Delta c\in K$. 
 But then 
 \[
 f(c)=^\cJ f(c\Delta b)\Delta f(b)=^\cJ g(c\Delta b)\Delta g(b)=^\cJ g(c)
 \]
 completing the proof. 
 \end{proof} 
 
 In the following $\calR$ denotes the forcing for adding a random real and $\dot x$ is 
 the canonical $\calR$-name for the random real. 

 \begin{coro} 
 \label{C.x1} 
 Assume  $\cJ$ is a  Borel ideal and
$f$ and $g$ are continuous functions such that 
each is a representation of a homomorphism from $\cP(\omega)$ into $\cP(\omega)/\cJ$. 
Furthermore assume 
$\calR$ forces $f(\dot x)=^\cJ g(\dot x)$. 
Then $f(c)=^\cJ g(c)$ for all $c\subseteq \omega$. 
\end{coro}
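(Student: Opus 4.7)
The plan is to reduce the corollary to Lemma~\ref{L.x1} by showing that the hypothesis forces $\Delta_{f,g,\cJ}$ to be Haar-null, and then to invoke that lemma to conclude $\Delta_{f,g,\cJ}=\emptyset$.

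First I would check that $\Delta_{f,g,\cJ}$ is a Borel (hence measurable) subset of $\cP(\omega)$. Since $f$ and $g$ are continuous, the map $c\mapsto f(c)\Delta g(c)$ is continuous from $\cP(\omega)$ into $\cP(\omega)$, and the ideal $\cJ$ is Borel by assumption; thus $\Delta_{f,g,\cJ}$ is the preimage of the Borel set $\cP(\omega)\setminus\cJ$ under this continuous map.

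Next I would show that $\Delta_{f,g,\cJ}$ is Haar null. Suppose for contradiction that it has positive measure. By inner regularity of the Haar measure there is a compact set $K\subseteq \Delta_{f,g,\cJ}$ with $\mu(K)>0$. By the definition of $\calR$, the set $K$ is a condition in $\calR$, and by the definition of the canonical name $\dot x$ for the random real one has $K\forces_\calR \dot x\in \check K$. Since $K\subseteq \Delta_{f,g,\cJ}$ and this inclusion is absolute (being a statement about a ground-model Borel set), it follows that
\[
K\forces_\calR f(\dot x)\neq^\cJ g(\dot x),
\]
contradicting the assumption that $\calR$ forces $f(\dot x)=^\cJ g(\dot x)$.

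Therefore $\Delta_{f,g,\cJ}$ is null, and Lemma~\ref{L.x1} immediately gives $\Delta_{f,g,\cJ}=\emptyset$, which is the desired conclusion. There is no genuine obstacle here; the only point requiring a moment of care is the translation between the forcing statement and measure-theoretic nullity, which is the standard characterization of the random real forcing.
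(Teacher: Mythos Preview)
Your proposal is correct and follows essentially the same route as the paper: both reduce to Lemma~\ref{L.x1} by assuming $\Delta_{f,g,\cJ}$ has positive measure, passing to a compact $K\subseteq\Delta_{f,g,\cJ}$ of positive measure, and deriving a contradiction with the forcing hypothesis. The only cosmetic difference is that the paper phrases the contradiction via a countable transitive model $M$ and a real $x\in K$ random over $M$ (then invokes $\Delta^1_1$-absoluteness to pull $f(x)=^{\cJ}g(x)$ back to $V$), whereas you argue directly that $K$, as a condition in $\calR$, forces $\dot x\in\check K\subseteq\Delta_{f,g,\cJ}$ and hence forces $f(\dot x)\neq^{\cJ}g(\dot x)$; these are equivalent standard formulations of the same absoluteness fact for random forcing.
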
  

\begin{proof} It will suffice to show that the assumptions of Lemma~\ref{L.x1} 
are satisfied. This is a standard fact but we include the details. 
Since the set $\Delta_{f,g,\cJ}$ is Borel, if it is not null then there 
exists a compact set $K\subseteq \Delta_{f,g,\cJ}$ of positive measure. 
If $M$ is a countable transitive model of a large enough fragment of ZFC 
containing codes for $K,f,g$, and $\cJ$ and
$x\in K$ is a random real over $M$, then $M[x]\models f(x)=^\cJ g(x)$ 
by the assumption on $f$ and $g$. However, this is a $\Delta^1_1$ statement
and is therefore true in $V$. But $x\in \Delta_{f,g,\cJ}$ and therefore $f(x)\not=^\cJ g(x)$, 
a contradiction. 
\end{proof}

\subsection{Trivializing automorphisms locally and globally}\label{S.L2} 
Ever since the second author's proof that all automorphisms of $\cP(\bbN)/FiN$ are 
trivial in an oracle-cc forcing extensions (\cite{Sh:b}), every proof that 
automorphisms of a similar quotient structure proceeds in (at least) two stages. 
In the first stage one proves that the automorphism is `locally trivial' and in the second
stage local trivialities are pieced together into a single continuous representation (see 
e.g., \cite[\S3]{Fa:AQ}). 
The present proof is no exception. 

Throughout this subsection we assume 
\[
(\bbP_\xi, \dot\bbQ_\eta: \xi\leq \fc^+, \eta<\fc^+)
\]
is as in Proposition~\ref{P.crn}. 
Therefore it 
is a countable support 
iteration  such that 
each $\dot\bbQ_\eta$ is  either some $\bbQ_{\bfx}$ or $\calR$,  
 and that in addition 
the maximal condition of $\bbP_\eta$ decides whether  $\dot \bbQ_\eta$ 
is $\calR$ or $\Qx$, and in the latter case it also decides  $x$, for all~$\eta$. 
We shall write 
$p\forces_\xi \phi$ instead of~$p\forces_{\bbP_\xi} \phi$. 

%If $\cX$ is a predicate for a subset of $\bbR$ and $a$ is a real, a statement $\phi$ is 
%$\bSigma^1_2(a,\cX)$ if it is of the form $(\exists x)(\forall y)\phi_0(x,y,a,\cX)$ where $\phi_0$ is a $\bDelta_0$ statement. For example, the statement $a\in \Triv^1_\Phi$ is $\bSigma^1_2(a,\Phi)$ since
%it asserts that there exists a continuous function $f\colon \cP(a)\to \cP(\omega)$ such that 
%for every $x\subseteq a$ we have $(a,f(a))\in \Phi$ (where $\Phi$ is identified with its graph). 

\begin{lemma} \label{L2.1} With $\iteration$ as above, 
assume that for  every  partition $I$ of $\omega$ into finite intervals
the set 
\[
\{\xi<\fc^+\:  \forces_\xi \text{``$ \bbQ_\xi$ captures $I$ and $\cf(\xi)$ is uncountable''}\}
\]
 is stationary. 
Then every homomorphism between quotients over Borel ideals is locally $\bDelta^1_2$ (see \S\ref{S.Defs}). 
\end{lemma}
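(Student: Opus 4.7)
The plan is by contradiction: assume some homomorphism $\Phi\colon\cP(\omega)/\cI\to\cP(\omega)/\cJ$ between quotients over Borel ideals in $V[G]$ has $\Triv^2_\Phi$ meager. Since $\Triv^2_\Phi$ is an ideal containing $\Fin$ (closedness under finite unions uses that $\Phi$ is a Boolean homomorphism), Jalali--Naini--Talagrand yields a partition $I=(I_n:n\in\omega)$ of $\omega$ into finite intervals with $\Triv^2_\Phi\subseteq\calH(I)$. The goal is to produce an $a\notin\calH(I)$ for which $a\in\Triv^2_\Phi$.

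First I would combine Corollary~\ref{C.sigma-1-2} with Proposition~\ref{P.sigma-1-2} (the statement ``$\Triv^2_{\dot\Phi}\subseteq\calH(\dot I)$'' is projective in $\dot\Phi,\dot I$) and the stationary-capturing hypothesis to fix $\xi<\fc^+$ with $\cf(\xi)\geq\omega_1$ such that in $V[G\rs\xi]$ the reflected $\Phi_\xi$ is a homomorphism between Borel quotients with $\Triv^2_{\Phi_\xi}\subseteq\calH(I)$, and $\forces_\xi$ ``$\bbQ_\xi$ captures $I$'' holds.  Partitions $I$ first appearing in some intermediate extension rather than $V$ are accommodated by a standard bookkeeping refinement of the capturing hypothesis (capturing every partition appearing at some stage at stationarily many later stages).

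At such $\xi$, let $\dot r$ be the capturing $\bbQ_\xi$-name, and fix $p\in\bbQ_\xi$, an infinite $c$, and $q_d\leq p$ for every $d\subseteq a:=\bigcup_{n\in c}I_n$ with $q_d\forces\dot r\cap a=d$. Since $c$ is infinite, $a\notin\calH(I)$ and therefore $a\notin\Triv^2_{\Phi_\xi}$. On the other hand, apply Proposition~\ref{P.crn} to the $\bbP_{\fc^+}$-name $\Phi([\dot r\cap a])$ (as a name for a subset of $\omega$) to obtain $p^*\leq p$, a countable $S\subseteq\fc^+$ containing $\xi$, a compact $F\subseteq\bbR^S$, and a continuous $h\colon F\to\cP(\omega)$ with
\[
p^*\forces h(\langle\dot g_\eta:\eta\in S\rangle)\in\Phi([\dot r\cap a]).
\]
A fusion inside $\bbQ_\xi$ refines $p^*$ so that $p^*(\xi)$ decides no information about $\dot r\cap a$ beyond what $p$ does; thus $p^*\wedge q_d$ is a condition for every $d\subseteq a$. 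For such $d$ and any $b\subseteq\omega$, the assertion $\Phi_\xi([d])=[b]_\cJ$ is then equivalent to saying that for some (equivalently, every) countable transitive $\ZFC$-model $M$ containing the relevant parameters, and every $M$-generic $\bar g$ over $\bbP_S$ extending $p^*\wedge q_d$, $h(\bar g)\,\Delta\, b\in\cJ$. Since $\cJ$ is Borel, Lemma~\ref{L0} applied to the subiteration $\bbP_S$ yields both $\bSigma^1_2$ and $\bPi^1_2$ definitions of the graph $\{(d,b):d\subseteq a,\ \Phi_\xi([d])=[b]_\cJ\}$, so $\Phi_\xi\rs a$ admits a $\bDelta^1_2$ representation, i.e., $a\in\Triv^2_{\Phi_\xi}$, the desired contradiction.

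The main obstacle is this last step, which has to weave three ingredients living at different levels: continuous reading of names for $\Phi([\dot r\cap a])$ along the tail of the iteration past $\xi$, the $\bbQ_\xi$-internal capturing of $\dot r\cap a$, and $\bDelta^1_2$ verifiability inside $V[G\rs\xi]$. Arranging $p^*\wedge q_d$ to be a condition uniformly in $d$ via a fusion in $\bbQ_\xi$, and matching the $\bSigma^1_2$ and $\bPi^1_2$ forcing-over-countable-models descriptions by Shoenfield-style absoluteness, are the delicate technical points.
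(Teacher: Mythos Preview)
Your approach is essentially the paper's: contradiction via Jalali--Naini--Talagrand, reflection to an intermediate stage $\xi$ where $\bbQ_\xi$ captures the witnessing partition, continuous reading of the image of the captured generic, and Lemma~\ref{L0}/Lemma~\ref{L.Delta-1-2} to extract a $\bDelta^1_2$ graph on $\cP(a)$.

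The one substantive difference is the order of operations, and your proposed handling of it does not work as stated. You first apply capturing (obtaining $c$ and $q_d\leq p$) and only then do continuous reading (obtaining $p^*\leq p$), after which you need $p^*\wedge q_d$ to be a condition for every $d\subseteq a$. Your claim that ``a fusion inside $\bbQ_\xi$ refines $p^*$ so that $p^*(\xi)$ decides no information about $\dot r\cap a$ beyond what $p$ does'' is backwards: fusion strengthens conditions, and the continuous reading that produced $p^*$ may well have forced $p^*(\xi)$ to decide large pieces of $\dot r\cap a$, making $p^*(\xi)$ incompatible with most $q_d$. The paper simply reverses the order: first continuously read $\dot y$ below some $p$, \emph{then} apply capturing to $p(\xi)$ to obtain the infinite set and conditions $p_b\leq p$ with $\supp(p_b)=\supp(p)$ and $b\mapsto p_b$ continuous (this uses the explicit shape of capturing in $\bbQ_\bfx$). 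In your order the same fix is available: discard the original $c$ and re-apply capturing to $p^*(\xi)$. Either way the repair is one line; the remainder of your argument matches the paper.

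One further minor point: the paper uses $\omega^\omega$-bounding of the iteration to pull the witnessing partition $I$ back to the ground model outright, rather than the bookkeeping refinement you invoke.
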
 

\begin{proof} Fix a name $\dot \Phi$ for a homomorphism between quotients over Borel 
ideals $\cI$ and $\cJ$. By moving to an intermediate
forcing extension containing relevant Borel codes, 
we may assume the ideals $\cI$ and $\cJ$ are in the ground model. 
Let $G\subseteq \bbP_{\fc^+}$ be a generic filter. 

Assume $\Triv^2_{\intG(\dot \Phi)}$ is  meager in $V[G]$ with a witnessing partition  $\intG(\dot I)$ 
(cf. the discussion before Corollary~\ref{C.sigma-1-2}). 
By Corollary~\ref{C.sigma-1-2} the set of $\xi<\fc^+$ of uncountable cofinality 
such that $\intGx\xi(\dot I)$ witnesses $\Triv^2_{\intGx\xi(\dot \Phi\rs \xi)}$ is meager in $V[G\rs \xi]$
includes a relative club. 

Since the iteration of proper $\omega^\omega$-bounding forcings is proper and $\omega^\omega$-bounding (\cite{Sh:f}) 
the 
forcing is $\omega^\omega$-bounding, we may assume $\intG(\dot I)$ is a ground-model 
partition, $\vec I=(I_n: n\in \omega)$
By our assumption, there is a stationary set $\bfS$ of ordinals of uncountable cofinality 
such that for all $\eta\in \bfS$
we have
\begin{enumerate}
%\popcounter
%\item $\cf(\xi)$ is uncountable, 
\item $\forces_\eta$``$ \dot \bbQ_\xi$ adds a real $\dot x$ that captures $\vec I$''.
\end{enumerate}
Fix $\eta\in \bfS$ for a moment. By going to the intermediate extension we may assume $\eta=0$. 
Let $\dot y$ be a name for a subset of $\omega$ 
such that 
\[
[\dot y]_{\cJ}=\Phi([\dot x]_{\cI}). 
\]
By the continuous reading of names (Proposition~\ref{P.crn})  we can find condition $p$ with 
support $S$ containing $0$, compact 
$F\subseteq \cP(\bbN)^S$ and continuous $h\colon F\to \cP(\omega)$ such that
$p\forces h(\langle \dot g_\xi: \xi\in S\rangle)=\dot y$. Note that $\dot x$ is equal to $\dot g_0$
hence it is ``continuously read.''

Since $\bbQ_0$ captures $I$ we can find an infinite $d$ such that with $a=I_d$ 
for every $b\subseteq a$ condition $p_b\leq p$ forces $\dot x\cap a=b$. 
Also, $\supp(p_b)=\supp(p)$ 
and
(by the definition of $\Qx$) the map $b\mapsto p_b$ is continuous. 

By the choice of $\dot y$, with  $c=\Phi_*(a)$  
by Lemma~\ref{L.PS} we have that 
\[
p_b\forces_S \dot y\cap c=^{\cJ} \Phi_*(b).
\]
By Lemma~\ref{L.Delta-1-2} the set 
\[
\{(b,e): b\subseteq a, e\subseteq c, e=^{\cJ}\Phi_*(b)\}
\]
is $\bDelta^1_2$. 

Therefore $a$ 
and $\dot \bbQ_\xi$ satisfy the assumptions of Lemma~\ref{L0} and 
 in $V[G\rs\xi]$ the restriction of $\intGx\xi(\dot\Phi\rs \xi)$  to $\cP(a)/\cI$
is $\bDelta^1_2$, contradicting our assumption. 
Since assuming $\Triv^2_{\intG(\dot\Phi)}$ was meager lead to a contradiction, this concludes the proof. 
%Since the cofinality of $\xi$ is uncountable, there is $\alpha(\xi,a)<\xi$ such that 
%$d$ and $f_d$ are added by $\bbP_{\alpha(\xi,a)}$. By pressing down lemma, 
%there are a stationary $\bfS'\subseteq \bfS$ and $\alpha$ such that $\alpha=\alpha(\xi,a)$ 
%for all $\xi\in \bfS'$. Since there are only $\fc$ many $\bbP_\alpha$ names for reals, 
%there is a stationary $\bfS''\subseteq \bfS'$, names $\dot d(a)$ and $\dot f_d(a)$, 
%and a condition $p(a)$ that forces $\dot f_d(a)$ is a continuous representation of
%$\dot \Phi$ on $\cP(\dot d(a))/\cI$. 
\end{proof} 

\begin{definition}
Assume $\bbP$ is a forcing notion and $\dot\Phi$ is a $\bbP$-name for an isomorphism
between quotients over Borel ideals $\cI$ and $\cJ$ 
which extends ground-model isomorphism  $\Phi$ 
between these quotients. 
We say that $\dot\Phi$ is \emph{$\bbP$-absolutely locally topologically trivial} if the following 
apply (in order to avoid futile discussion we assume $\bbP$ is $\omega^\omega$-bounding):
\begin{enumerate}
\item $\Phi$ is locally topologically trivial, 
\item  $\bbP$ forces that the 
 continuous witnesses of local topological triviality of $\Phi$ 
 witness local topological triviality of $\dot\Phi$. 
\end{enumerate}
\end{definition}

In order to justify this definition we note that 
this is not a consequence of the assumption that $\Phi$ is  locally topologically trivial
and $\dot\Phi$ is forced to be locally topologically trivial.  
 By a result of Stepr\=ans, there is  a $\sigma$-linked 
forcing notion such that a trivial automorphism of $\cP(\omega)/\Fin$ extends to a trivial automorphism, 
but the triviality is not implemented by the same function~(\cite{Ste:Autohomeomorphism}). 
Stepr\=ans used this to show that  there is a forcing iteration $\bbP_\kappa$ that forces 
Martin's Axiom and the existence of a nontrivial  automorphism $\Phi$ of $\cP(\omega)/\Fin$
that is trivial in $V[G\rs \xi]$ for cofinally many~$\xi$.

 The following key lemma shows that in our forcing extension  
   local topological triviality is always witnessed by a  $\bPi^1_2$ set. 
 
\begin{lemma} \label{L3} 
Assume   $\iteration$  is as in the beginning \S\ref{S.L2}  such that 
$\bbQ_0$ is $\calR$. Also assume  $\dot\Phi$ is a $\bbP_\kappa$-name for a $\bbP_\kappa$-absolutely 
locally topologically trivial
 isomorphism  between quotients over Borel ideals $\cI$~and~$\cJ$. 
Then the set 
\[
\{(c,d): \Phi_*(c)=^\cJ d\}
\]
is $\bPi^1_2$. 
\end{lemma}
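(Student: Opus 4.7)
The plan is to express ``$\Phi_*(c) =^\cJ d$'' as a $\bPi^1_2$ condition encoding absoluteness under $\calR$-forcing, built from the continuous local witnesses for $\Triv_\Phi^1$ together with Corollary~\ref{C.x1}. First, fix in $V$ a continuous function $F \colon \cP(\omega) \to \cP(\omega)$ and a set $a \in \Triv_\Phi^1$ such that $F(b) =^\cJ \Phi_*(b)$ for every $b \subseteq a$. By the $\bbP_\kappa$-absolute local topological triviality of $\dot\Phi$, this $F$ continues to represent $\dot\Phi \rs a$ in every $\calR$-generic extension of $V$, and indeed in every $\bbP_\kappa$-generic extension.

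The characterization I propose is the following: $\Phi_*(c) =^\cJ d$ if and only if for every countable transitive model $M \models \ZFC$ containing codes for $c, d, F, a, \cI, \cJ$ and every $M$-generic random real $x$ for $\calR$, every continuous function $G \in M[x]$ that represents a homomorphism $\cP(\omega) \to \cP(\omega)/\cJ$ and agrees modulo $\cJ$ with $F$ on $\cP(a) \cap M[x]$ also satisfies $G(c) =^\cJ d$. The outer quantifiers over $M$ and $x$ are universal over reals, the inner quantifier over $G$ ranges over reals in the countable set $M[x]$, and the matrix is Borel in all parameters; this yields a $\bPi^1_2$ formula in $(c, d)$.

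The forward direction uses Corollary~\ref{C.x1}. Given $G$ as in the hypothesis, construct inside $V[x]$ a second continuous homomorphism-representation $G' \colon \cP(\omega) \to \cP(\omega)/\cJ$ extending $F$, assembled from the absolute ground-model continuous witnesses indexed by $\Triv_\Phi^1$, and designed so that $G'$ agrees modulo $\cJ$ with $\dot\Phi_*$ on $\cP(\omega) \cap V[x]$. Since $G$ and $G'$ agree modulo $\cJ$ on $\cP(a) \cap M[x]$ and both are continuous homomorphism-representations, the homomorphism property forces their agreement on the random real $x$; Corollary~\ref{C.x1} then upgrades this to agreement modulo $\cJ$ on all of $\cP(\omega) \cap M[x]$, so $G(c) =^\cJ G'(c) =^\cJ \Phi_*(c) =^\cJ d$. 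The backward direction is contrapositive: if $\Phi_*(c) \neq^\cJ d$, the constructed $G'$, placed inside an appropriate $M[x]$, serves as a $G$ witnessing $G(c) \neq^\cJ d$.

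The main obstacle is the construction of the globally continuous homomorphism-representation $G'$ inside $V[x]$. Corollary~\ref{C.x1} applies only to globally continuous pairs, whereas local topological triviality in $V$ supplies only partial continuous representations on sets in $\Triv_\Phi^1$. Assembling $G'$ coherently from sufficiently many such partial witnesses --- using the random real $x$ to select the correct continuous extension modulo $\cJ$ --- is the delicate point; this is precisely where the $\bbP_\kappa$-absolute preservation of the ground-model local witnesses plays its essential role, guaranteeing that enough continuous pieces survive in $V[x]$ to patch together a global representation of $\dot\Phi_*$.
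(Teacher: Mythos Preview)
Your proposal has a genuine gap at precisely the point you flag as ``the main obstacle.'' You fix a \emph{single} $a\in\Triv_\Phi^1$ and a single continuous $F$ representing $\Phi\rs a$, and then attempt to characterize $\Phi_*(c)=^\cJ d$ by quantifying over continuous homomorphism-representations $G$ that agree with $F$ on $\cP(a)$. But agreement with $F$ on $\cP(a)$ places no constraint whatsoever on $G\rs\cP(\omega\setminus a)$, so your forward direction fails outright: for $c$ with $c\setminus a\notin\cI$ there will be many continuous homomorphism-representations $G$ agreeing with $F$ on $\cP(a)$ yet with $G(c)\neq^\cJ d$. Your attempted rescue --- ``the homomorphism property forces their agreement on the random real $x$'' --- is not correct; two homomorphisms agreeing on the subalgebra $\cP(a)$ need not agree on $x$, and Corollary~\ref{C.x1} gives nothing here. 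Moreover, the promised global continuous $G'$ representing $\dot\Phi_*$ in $V[x]$ is never constructed; if it could be built as you describe, $\Phi$ would already have a continuous representation and the lemma would be superfluous.

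The paper's proof avoids exactly this difficulty by never constructing a global continuous representation. Instead it defines a $\bDelta^1_2$ set $\cZ$ of \emph{all} quadruples $(a,b,f,g)$ where $f,g$ are mutually inverse continuous local representations on $\cP(a),\cP(b)$ that are forced (by a fixed condition $p$, via continuous reading of the name $\dot y=\dot\Phi_*(\dot x)$) to agree with $\dot\Phi$ on the random real. Corollary~\ref{C.x1} is applied separately to each such $f_a$ to conclude $f_a(c)=^\cJ\Phi_*(c)$ for all $c\subseteq a$. The $\bPi^1_2$ formula is then
\[
(\forall (a,b,f,g)\in\cZ)\ f(c\cap a)=^\cJ b\cap d,
\]
and completeness of this description uses that the ideal $\cK=\{a:(\exists b,f,g)(a,b,f,g)\in\cZ\}$ is nonmeager (it contains $\Triv_\Phi^1$), together with Lemma~\ref{L.y1}, to detect any discrepancy $\Phi_*(c)\Delta d\notin\cJ$. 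The essential idea you are missing is that one must quantify over the \emph{entire nonmeager family} of local continuous witnesses, not a single one.
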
 

\begin{proof} We have $\Phi\colon \cP(\omega)/\cI\to \cP(\omega)/\cJ$. 
Let $\dot x$ be the canonical $\bbQ_0$-name for the random real 
and let $\dot y$ be a $\bbP_\kappa$-name for the image of $\dot x$ by the extension of $\Phi$. 
By the continuous reading of names (Proposition~\ref{P.crn})  we can find condition $p$ with countable  
support $S$ containing $0$, compact 
$F\subseteq \cP(\bbN)^S$ and continuous $h\colon F\to \omega^\omega$ such that
$p\forces h(\langle \dot g_\xi: \xi\in S\rangle)=\dot y$. Again $\dot x$ is equal to $\dot g_0$
hence it is ``continuously read.''

Consider the set
$\cZ$ of all $(a,b,f,g)$ such that 
\begin{enumerate}
\item\label{I.Z1} $a$ and $b$  are   subsets of $\omega$. 
\item \label{I.Z2} $f\colon \cP(a)\to \cP(b)$ and $g\colon \cP(b)\to \cP(a)$ are continuous maps, 
\item\label{I.Z3} $f$ is a representation of a homomorphism from $\cP(a)/\cI$ into $\cP(b)/\cJ$, 
\item\label{I.Z4} $g$ is a representation of a homomorphism from $\cP(b)/\cJ$ into $\cP(a)/\cI$, 
\item \label{I.Z3+} $f(c)\in \cJ$ if and only if $c\in \cI$, 
\item\label{I.Z5} $f(g(c))=^{\cJ} c$ for all $c\subseteq b$, 
and  $g(f(c))=^{\cI} c$ for all $c\subseteq a$, 
\item \label{I.Z6} $p$ forces  that $f(\dot x\cap \check a)=^{\cJ}  \dot y\cap \check b$. 
\item \label{I.Z7} $p$ forces that $g(\dot y\cap \check b)=^{\cI}  \dot x\cap \check a$. 
\pushcounter
\end{enumerate}
Conditions \eqref{I.Z1} and \eqref{I.Z2} state that 
 $\cZ$ is a subset of the compact metric 
space $\cP(\omega)^2\times C(\cP(\omega), \cP(\omega))^2$, 
where $C(X,Y)$ denotes the compact metric 
space of continuous functions between compact metric spaces $X$ and $Y$.
Since   
 \eqref{I.Z3} states that 
\[
(\forall x\subseteq a)(\forall y\subseteq a) f(x\cup y)=^\cJ f(x)\cap f(y) 
\]
\[
(\forall x\subseteq a) f(a)\setminus f(x)=^\cJ f(a\setminus x)
\]
this is a  $\bPi^1_1$ condition, and similarly for  \eqref{I.Z4}. Similarly \eqref{I.Z3+}  and \eqref{I.Z5} 
are $\bPi^1_1$.   
Lemma~\ref{L0} implies that the remaining condition, \eqref{I.Z6}, 
is $\bDelta^1_2$ (recall that $\cJ$ was assumed to be  Borel). 
Therefore the set $\cZ$ is $\bDelta^1_2$. 
The set 
\[
\cK=\{a: (a,b,f,g)\in \cZ\text{ for some }(b,f,g)\}
\]
is easily seen to be an ideal that includes $\Triv_\Phi^1$.  Since $\Phi$ is locally topologically trivial 
it  is nonmeager. 

We shall now prove a few facts about the elements of $\cZ$. 

An  $(a,b,f,g)\in \cZ$ can be re-interpreted in the forcing extension, 
and in particular we identify function $f$ with the corresponding
continuous function. Properties \eqref{I.Z1}--\eqref{I.Z5} are $\bPi^1_1$ and therefore
still hold in the extension.  In particular $f$ is forced to be a representation of an isomorphism. 

For $a\in \cK$ let $f_a$ and $g_a$ denote functions such that $(a,b,f_a,g_a)\in \cZ$ for some $b$.  
For  $a\in \Triv^1_\Phi$ let $h_a\colon \cP(a)\to \cP(\omega)$ be a continuous
representation of $\Phi\rs a$. 
Let $\Phi_*$ denote a representation of the extension of  $\Phi$ in the forcing extension. 
\begin{enumerate}
\popcounter
\item \label{I.ZZ.0} If $a\in \cK$ then $f_a(c)=^{\cJ} \Phi_*(c)\cap b$ for all $c\subseteq a$. 
\pushcounter
\end{enumerate}
This is a consequence of Corollary~\ref{C.x1}, since \eqref{I.Z6} states that 
 $p$ forces 
 \[
 f_a(\dot x\cap \check a)=^{\cJ}\Phi_*(\dot x)\cap \check b.
\]
If $\Phi_*^{-1}$ denotes a representation of $\Phi^{-1}$ then 
by the same argument and~\eqref{I.Z7}  we have  
\[
g_a(d)=^{\cI} \Phi_*^{-1}(d)\cap a
\]
 for all $d\subseteq b$. 
\begin{enumerate}
\popcounter
\item \label{I.ZZ.0+} If $a\in \cK$ then $f_a(c)=^{\cJ} \Phi_*(c)$ for all $c\subseteq a$. 
\pushcounter
\end{enumerate}
Let $d=\Phi_*(a)\setminus b$ and  $c=\Phi_*^{-1}(d)$. Then $c\setminus a$ belongs to $\cI$.  
Also, with $c'=c\cap a$ we have  $f_a(c')=^{\cJ} \Phi_*(c)\cap b=d\cap b=\emptyset$. 
However, \eqref{I.Z5} and \eqref{I.Z4}  together with this imply 
\[
c=^{\cI}c'=^{\cI}g_a (f_a(c'))=^{\cI} g_a(\emptyset)=^{\cI}\emptyset. 
\]
Unraveling the definitions, we have that $\Phi_*^{-1}$ sends 
$\Phi_*(a)\setminus b$  to $\emptyset$ modulo $\cI$ and
therefore that $\Phi_*(a)=^{\cJ} b=f_a(a)$. By applying \eqref{I.ZZ.0} and Corollary~\ref{C.x1}, 
\eqref{I.ZZ.0+} follows. 
\begin{enumerate}
\popcounter
\item \label{I.ZZ.1} If $a\in \Triv_\Phi^1$ then $a\in \cK$ and $h_a(c)=^\cJ \Phi_*(c)=^\cJ f_a(c)$
for all $c\subseteq a$. 
\pushcounter
\end{enumerate}
That $a\in \cK$ is immediate from the definitions of $\cZ$ and $\cK$, and $h_a(c)=^\cJ \Phi_*(c)$
is immediate from $a\in \Triv_\Phi^1$ and the definition of $h_a$. 
The last equality, $\Phi_*(c)=^\cJ f_a(c)$ for all $c\subseteq a$, was proved in   \eqref{I.ZZ.0+}. 

Putting together \eqref{I.ZZ.0+} and \eqref{I.ZZ.1} we obtain that $\cK=\Triv^1_\Phi$ and
that $f_a$ witnesses $a\in \Triv^1_\Phi$ for every $a\in \cK$. 

\begin{enumerate}
\popcounter
\item \label{I.Z11} We have 
\[
\{(c,d) : \Phi_*(c)=^\cJ d\}= 
\{(c,d): (\forall (a,b,f,g)\in \cZ) f(c\cap a)=^\cJ b\cap d\}
\]
\pushcounter
\end{enumerate}
Take $(c,d)$ such that $\Phi_*(c)=^\cJ d$. Then for every $(a,b,f,g)\in \cZ$
we have $\Phi_*(c\cap a)=^\cJ f(c\cap a)$ by \eqref{I.ZZ.1} and \eqref{I.ZZ.0+}, and therefore 
$(c,d)$ belongs to the right-hand side set. 

Now take $(c,d)$ such that $\Phi_*(c)\Delta d$ is not in $\cJ$. 

Assume for a moment that $e=\Phi_*(c)\setminus d\notin \cJ$. 
Since $\Phi$ is an isomorphism, we can find  $a$  such that $\Phi_*(a)=^\cJ e$. 
We have that $a$ is $\cI$ positive. 
Since $\cK$ is nonmeager, by Lemma~\ref{L.y1} we can find  $a'\subseteq a$ 
such that $a'\in \cK\setminus \cI$. 
Then $f_{a'}(c\cap a')$ is $\cJ$-positive, included (modulo $\cJ$) 
in $e$, and disjoint (modulo $\cJ$) from $d$. Therefore $(a',f_{a'})$ witness that 
$(c,d)$ does not belong to the right-hand side of \eqref{I.Z11}.

We must therefore have  $e=d\setminus \Phi_*(c)\notin \cJ$ (there is no 
harm in denoting this set by  $e$, since the existence of the 
set denoted by $e$ earlier lead us to a contradiction). Applying the above argument
we can find $a'\in \cK$ such that $c\cap a'$ is $\cI$-positive, but its
image under $f_{a'}$ is included (modulo $\cJ$) 
in $d$ and disjoint (modulo $\cJ$) from $\Phi_*(c)$, which is again a contradiction.

By  \eqref{I.Z11} we have the required $\bPi^1_2$ definition of $\Phi$. 
\end{proof}

\section{Proofs}
\label{S.Proofs}

\subsection*{Proof  of Theorem~\ref{T0}} 

\label{S.ProofT1}
By \S\ref{S.CR}  for every partition $I$ of $\omega$ into finite intervals there is a
forcing notion of the form $\Qx$  that adds a real which captures $I$.   Each of these forcings is proper, real, has continuous reading of names and is $\omega^\omega$-bounding. 
Starting from a model of CH partition $\{\xi<\aleph_2: \cf(\xi)=\aleph_1\}$ 
into $\aleph_1$  stationary sets. 
Consider a countable support iteration 
$(\bbP_\xi, \dot\bbQ_\eta: \xi\leq \omega_2, \eta<\omega_2)$ 
of forcings of the form $\bbQ_\bfx$ and random reals such that for every $\dot I$ 
the set $\{\xi: \cf(\xi)=\omega_1$ and $\dot \bbQ_\xi$ is $\bbQ_\bfx\}$   is stationary 
and also $\{\xi: \cf(\xi)=\omega_1$ and $\dot \bbQ_\xi$ is the poset for adding a random real$\}$
is stationary. 

Since this forcing is a countable support  iteration of proper 
$\omega^\omega$-bounding forcings it is proper and $\omega^\omega$-bounding
(by \cite[\S VI.2.8(D)]{Sh:f}) and therefore $\fd=\aleph_1$ in the extension. 

Now fix names $\dot \cI$ an $\dot \cJ$ for Borel ideals and 
a name $\dot\Phi$ for an automorphism between Borel 
quotients $\cP(\omega)/\dot \cI$ and
$\cP(\omega)/\dot \cJ$. 
By Lemma~\ref{L2.1}, $\dot \Phi$ is forced to be locally $\bDelta^1_2$
and by Corollary~\ref{C.sigma-1-2} 
there is a stationary set $\bS$ of $\xi$  such that $\cf(\xi)=\omega_1$ such that 
$\dot\Phi\rs \xi$ is a $\bbP_\xi$ name for a a locally $\bDelta^1_2$-isomorphism,  
and $\dot\bbQ_\xi$ is the standard poset for adding a random real. 

By our assumption that all $\bSigma^1_2$ sets have the property of Baire and 
Lemma~\ref{L-1}, $\dot\Phi$ is forced to be locally topologically trivial. 
By Lemma~\ref{L3}, if $\xi\in\bS$ then   $\dot\Phi$ is  $\bPi^1_2$  in $V[G\rs\xi]$. 
Therefore  $\dot \Phi$ is  $\bPi^1_2$ in $V[G]$. 

Since our assumption that there exists a measurable cardinal  implies
 that we have $\bPi^1_2$-uniformization of this 
graph, $f\colon \cP(\omega)\to \cP(\omega)$
and all $\bPi^1_2$ sets have the Property of Baire, 
 $\Phi$ has a Baire-measurable representation. By a well-known fact 
(e.g.,  \cite[Lemma~1.3.2]{Fa:AQ}) 
$\Phi$ has a continuous representation.

In order to add $2^{\aleph_0}<2^{\aleph_1}$ to the conclusions,
 start from a model of CH and add $\kappa\geq\aleph_3$ of the so-called Cohen 
subsets of $\aleph_1$ to increase $2^{\aleph_1}$ to $\kappa$ while preserving CH.
More precisely, we force  with the poset of all countable partial functions $p\colon \aleph_3\times \aleph_1\to \{0,1\}$ ordered by the extension. 
Follow this by  the iteration $\bbP_{\aleph_2}$ of $\Qx$ and $\calR$ 
defined above. The above argument was not sensitive
to the value of $2^{\aleph_1}$ therefore all isomorphisms still have continuous representations. 
Finally, the iteration does not collapse $2^{\aleph_1}$ because a simple $\Delta$-system argument shows that it has $\aleph_2$-cc.

\subsection*{Proof of Theorem~\ref{T0.uB}} \label{S.uB}
Not much more remains to be said about this proof. 
Assume there exist class many Woodin cardinals, consider the very same forcing iteration 
as in  the proof of Theorem~\ref{T0} and fix names for universally Baire ideals $\dot\cI$ and $\dot\cJ$
as well as for an isomorphism $\dot\Phi$ between their quotients. Proofs of 
lemmas from \S\ref{S.LBP} show that the graph of $\dot\Phi$ is forced to be projective 
in $\dot\cI$ and $\dot\cJ$ and therefore universally Baire itself (see \cite{Lar:Stationary}). 
It can therefore be uniformized on a dense $G_\delta$ set by a continuous function, and 
therefore $\dot\Phi$ is forced to have a continuous representation.

\section{Concluding remarks} 
\label{S.ConcR}

As pointed out earlier, some of the ideas used here were present in the last section of \cite{ShSte:Martin}. 
However, in the latter only automorphisms of $\cP(\omega)/\Fin$ were considered and, more 
importantly, the model constructed there does have nontrivial automorphisms of $\cP(\omega)/\Fin$. 
This follows from  the very last paragraph of  
 \cite{ShSr:990}.

\begin{question} Are large cardinals necessary for the conclusion of Theorem~\ref{T0}? 
\end{question}

The answer is likely to be negative (as suggested by the anonymous referee) but it would 
 be nice to have a proof. 

Questions of whether isomorphisms with continuous representations are necessarily trivial 
and what can be said about triviality of homomorphisms as compared to isomorphisms (see 
\cite[Question~3.14.2]{Fa:AQ}) are 
as interesting as ever, but since we have no new information on these questions we shall move on. 
Problem~\ref{Pr.PFA}  reiterates one of the conjectures from \cite{Fa:Rigidity}, 
and a positive answer to \eqref{I.Pr.PFA.1} below 
 may require an extension of results about freezing gaps in Borel quotients 
 from~\cite{Fa:Luzin}. 

\begin{problem} \label{Pr.PFA} 
\begin{enumerate}
\item \label{I.Pr.PFA.1} Prove that PFA implies that all isomorphisms between quotients over Borel ideals 
have continuous representations.  
\item Prove that  all isomorphisms between quotients over Borel ideals 
have continuous representations in standard $\bbP_{\max}$ extension (\cite{Wo:Pmax2}, \cite{La:Forcing}).  
\end{enumerate}
\end{problem}

We end with two fairly ambitious questions. 
A positive answer to the  following   would be naturally conditioned on a large cardinal assumption
(see \cite{Fa:Absoluteness}). 

\begin{question} Is there a metatheorem analogous to Woodin's $\Sigma^2_1$ absoluteness 
theorem (\cite{Lar:Stationary}, \cite{Wo:Beyond}) and the $\Pi_2$-maximality of $\bbP_{\max}$ extension (\cite{Wo:Pmax2}, \cite{La:Forcing}), that provides a positive answer to Problem~\ref{Pr.PFA} (1) or (2)  automatically 
from Theorem~\ref{T0}? 
\end{question}

Let us temporarily abandon   Boolean algebras and 
briefly return to the general situation described in the introduction. 
Attempts to generalize these rigidity results to other categories 
were made, with limited success, in \cite{Fa:Liftings}. 
For example, quotient group $\prod_\omega \bbZ/2\bbZ/\bigoplus_{\omega}\bbZ/2\bbZ$
clearly has nontrivial automorphisms in ZFC. 
One should also mention the case of semilattices, when isomorphisms are locally trivial 
but not necessarily trivial  (\cite{Fa:Liftings}). 
On the other hand, 
PFA implies that all automorphisms of the Calkin algebra are trivial   (\cite{Fa:All}). 
 Note that 
`trivial' as defined here is equivalent to `inner' for automorphisms of the Calkin algebra, but this is not true for arbitrary corona algebras since in some cases the relevant multiplier algebra has outer automorphisms, unlike $\cB(H)$ (see \cite{CoFa:Automorphisms}, \cite{FaHa:Countable}).  

\begin{problem} In what categories can one prove  consistency of the
assertion that all isomorphisms between 
quotient structures based on standard Borel spaces are trivial? 
\end{problem}

\subsection{Groupwise Silver forcing} \label{S.CR.Silver} 
A simpler forcing notion that can be used in our proof in place of 
$\bbQ_\bfx$ defined above (\cite{Gha:FDD}). The `relevant parameter' is $\vec I=(I_n: n\in \omega)$, 
a partition of $\omega$ into finite intervals. 
Forcing $\bbS_{\vec I}$ consists of partial functions $f$ from a subset of $\omega$ into $\{0,1\}$
such that the domain of $f$ is disjoint from infinitely many of the $I_n$. Every condition $f$  
can be identified with the compact subset $p_f$ of $\cP(\omega)$ consisting of all functions 
extending $f$. Special cases of $\bbS_{\vec I}$ are Silver forcing (the case 
when $I_n=\{n\}$ for all~$n$) and `infinitely equal,'  or  EE, forcing (the case when $|I_n|=n$ for all $n$, see \cite[\S 7.4.C]{BarJu:Book}).

This is a suslin forcing and a fusion argument shows that it
is proper,  $\omega^\omega$-bounding and has continuous reading of names. 
Also, the proof that this forcing is $\omega^\omega$-bounding 
and proper are analogous to proofs of the corresponding facts for 
EE, \cite[Lemma~7.4.14]{BarJu:Book} and \cite[Lemma~7.4.12]{BarJu:Book}, respectively). 
Since this forcing is of the form $P_I$, Zapletal's results (\cite{Zap:Forcing}) make its analysis a bit more convenient. 
Proofs of these facts and  
applications of~$\bbS_{\vec I}$  to the rigidity of quotients appear in \cite{Gha:FDD}.

\subsection*{Acknowledgments} I.F.   is indebted to Jindra Zapletal for an 
eye-opening remark that shortened this paper for a couple of pages (see~\S\ref{S.CRN}) and 
for Lemma~\ref{L.PS}. 
He would also like to thank  Saeed Ghasemi for pointing out to a gap in the proof of Lemma~\ref{L3} in the original version  and finally 
to  Alan Dow, Michael Hru\v sak, Menachem Magidor, Arnie Miller, Juris Stepr\=ans, 
and the anonymous referee.

% \bibliography{lista,listb,ifmain}
%\bibliographystyle{amsplain}
\def\germ{\frak} \def\scr{\cal} \ifx\documentclass\undefinedcs
  \def\bf{\fam\bffam\tenbf}\def\rm{\fam0\tenrm}\fi % f**k-amstex!
  \def\defaultdefine#1#2{\expandafter\ifx\csname#1\endcsname\relax
  \expandafter\def\csname#1\endcsname{#2}\fi} \defaultdefine{Bbb}{\bf}
  \defaultdefine{frak}{\bf} \defaultdefine{=}{\B} % doublef**k-amstex!!
  \defaultdefine{mathfrak}{\frak} \defaultdefine{mathbb}{\bf}
  \defaultdefine{mathcal}{\cal}
  \defaultdefine{beth}{BETH}\defaultdefine{cal}{\bf} \def\bbfI{{\Bbb I}}
  \def\mbox{\hbox} \def\text{\hbox} \def\om{\omega} \def\Cal#1{{\bf #1}}
  \def\pcf{pcf} \defaultdefine{cf}{cf} \defaultdefine{reals}{{\Bbb R}}
  \defaultdefine{real}{{\Bbb R}} \def\restriction{{|}} \def\club{CLUB}
  \def\w{\omega} \def\exist{\exists} \def\se{{\germ se}} \def\bb{{\bf b}}
  \def\equivalence{\equiv} \let\lt< \let\gt>
\providecommand{\bysame}{\leavevmode\hbox to3em{\hrulefill}\thinspace}
\providecommand{\MR}{\relax\ifhmode\unskip\space\fi MR }
% \MRhref is called by the amsart/book/proc definition of \MR.
\providecommand{\MRhref}[2]{%
  \href{http://www.ams.org/mathscinet-getitem?mr=#1}{#2}
}
\providecommand{\href}[2]{#2}

\end{document}